\newtheorem{thm}{Theorem}[section]
\newtheorem*{thm*}{Theorem}
\newtheorem*{mainthm}{Main Theorem}
\newtheorem{theorem}{Theorem}
\newtheorem{corollary}[theorem]{Corollary}
\newtheorem{fact}[thm]{Fact}
\newtheorem{lem}[thm]{Lemma}
\newtheorem{cor}[thm]{Corollary}
\theoremstyle{definition}
\newtheorem{ex}[thm]{Example}
\newtheorem{defn}[thm]{Definition}
\theoremstyle{remark}
\newtheorem{rem}[thm]{\textsc{Remark}}
\newtheorem{notation}[thm]{\textsc{Notation}}
\newcommand{\IN}{\mathbb{N}}
\newcommand{\IZ}{\mathbb{Z}}
\newcommand{\Csp}{\op{Csp}}
\newcommand{\hCsp}{\mi{h}\!\op{Csp}}
\newcommand{\rCsp}{\op{Csp}^{\mathrm{red}}}
\newcommand{\iCsp}{\op{Csp}^{\mathrm{inj}}}
\newcommand{\iiCsp}{\op{Csp}^{2\mathrm{inj}}}
\newcommand{\riCsp}{\op{Csp}^{\mathrm{red},\mathrm{inj}}}
\newcommand{\riiCsp}{\op{Csp}^{\mathrm{red},2\mathrm{inj}}}
\newcommand{\op}[1]{\operatorname{#1}}
\newcommand{\id}{\op{id}}
\newcommand{\Id}{{\op{Id}}}
\newcommand{\colim}{\op{colim}}
\newcommand{\Map}{\op{Map}}
\newcommand{\Aut}{\op{Aut}}
\newcommand{\Fin}{\mi{Fin}}
\newcommand{\ga}{\alpha}
\newcommand{\gd}{\delta}
\newcommand{\gD}{\Delta}
\newcommand{\gc}{\gamma}
\newcommand{\gL}{\Lambda}
\newcommand{\gs}{\sigma}
\newcommand{\gS}{\Sigma}
\newcommand{\gO}{\Omega}
\newcommand{\mi}[1]{\mathit{#1}}
\newcommand{\mrm}[1]{\mathrm{#1}}
\newcommand{\hofib}{\mathrm{hofib}}
\newcommand{\mc}[1]{\mathcal{#1}}
\newcommand{\mcB}{\mc{B}}
\newcommand{\mcC}{\mc{C}}
\newcommand{\mcD}{\mc{D}}
\newcommand{\mcE}{\mc{E}}
\newcommand{\mcF}{\mc{F}}
\newcommand{\cl}[1]{\overline{#1}}
\newcommand{\ul}[1]{\underline{#1}}
\newcommand{\wt}[1]{\widetilde{#1}}
\newcommand{\cd}{\bullet}
\renewcommand{\c}[1]{\mathtt{#1}}
\newcommand{\Cob}{\c{Cob}}
\newcommand{\qand}{\quad \text{and} \quad}
\newcommand{\blank}{\underline{\ \ }}
\title{
Locally (co)Cartesian fibrations as realisation fibrations \\ and the classifying space of cospans
}
\author{Jan Steinebrunner}
\date{August 17, 2021}
\begin{document}

\maketitle
\renewcommand{\thefootnote}{\fnsymbol{footnote}} 
\footnotetext{\emph{MSC 2010}: 55R35, 18D30, 57R90, 19D06}
\renewcommand{\thefootnote}{\arabic{footnote}} 
\begin{abstract}
    We show that the conditions in Steimle's 
    ``additivity theorem for cobordism categories"
    can be weakened to only require \emph{locally} (co)Cartesian fibrations,
    making it applicable to a larger class of functors.
    As an application we compute the difference in classifying spaces
    between the infinity category of cospans of finite sets 
    and its homotopy category.
\end{abstract}


\section{Introduction}
When trying to compute the homotopy type of the classifying space $B\mcC$ of some category $\mcC$
it can be an effective strategy to
first construct a functor $F:\mcC \to \mcD$ to another category
and to then determine the homotopy fiber of the induced map on classifying spaces.
Quillen's \cite[Theorem~B]{Qui73} states that, under certain conditions, the homotopy fiber of $BF: B\mcC \to B\mcD$
is computed by the classifying space of the comma category $(F\downarrow d)$ of the functor $F$.
Here $(F\downarrow d)$ can be thought of as a categorical 
analogue of the path fiber in topology.  
However, there also is a much simpler variant of the fiber of a functor:
\begin{defn}
    The \emph{genuine fiber} $P_{|b} = \{b\}\times_\mcB \mcE$ of a functor $P:\mcE \to \mcB$ 
    at an object $b \in \mcB$
    has as objects those $e \in \mcE$ with $P(e) = b$ 
    and as morphisms those $(f:e \to e') \in \mcE$ with $P(f) = \id_b$.
\end{defn}
We will study functors for which 
already the \emph{genuine} fiber computes the homotopy fiber.
\begin{defn}[\!\!\cite{Rez14}]
    A functor $P:\mcE \to \mcB$ is a \emph{realisation fibration}
    if, for any other functor $F:\mcC \to \mcB$,
    the following diagram is homotopy Cartesian:
    \[
        \begin{tikzcd}
            B(\mcC \times_{\mcB} \mcE) \ar[r] \ar[d] & B\mcE \ar[d, "BP"] \\
            B\mcC \ar[r, "BF"] & B\mcB .
        \end{tikzcd}
    \]
\end{defn}

Intuitively, this property states that $BP$ behaves like a fibration:
pullbacks along $P$ become homotopy pullbacks along $BP$.
While this does not mean that $BP$ is actually a fibration,
it still implies that for any $b \in \mcB$ the genuine fiber $B\mcE_{|b}$ 
is the homotopy fiber of $B\mcE \to B\mcB$ at $b$.

In \cite{St18} Steimle proves an ``additivity theorem for bordism categories'',
which provides us with a supply of realisation fibrations by giving categorical criteria 
for a functor to be a realisation fibration.
We generalise this theorem by showing that the criteria can be weakened.
In the special case of discrete categories our main theorem is:
\begin{mainthm}
    If a functor is locally Cartesian and locally coCartesian, then it is a realisation fibration.
\end{mainthm}

Whether a functor $P:\mcE \to \mcB$ is locally (co)Cartesian can be checked as follows:
\begin{defn}\label{defn:locCart}
    A morphism $f:x \to y$ is called \emph{locally $P$-Cartesian} if for all $x' \in \mcE$
    with $P(x') = P(x)$ post-composition with $f$ induces a bijection
    \[
        (f \circ \blank): \mcE_{|P(y)}(x', x) = \{ g:x' \to x \;|\; P(g) = \id_{P(x)}\} 
        \longrightarrow  \{ h:x' \to y \;|\; P(h) = P(f)\}.
    \]
    The functor $P$ is called \emph{locally Cartesian} if for all morphisms $(k:a \to b) \in \mcB$
    and all lifts $y \in \mcE$ with $P(y)=b$ there is $x \in \mcE$ 
    and a locally $P$-Cartesian morphism $f:x \to y$ with $P(f) = k$.
    
    We say that $P$ is \emph{locally coCartesian} if $P^{op}: \mcE^{op} \to \mcB^{op}$ is
    locally Cartesian.
\end{defn}

With future applications in mind, we will prove our main theorem in the more general context of 
\emph{weakly unital topological categories}. It reads as follows:
\begin{theorem}[See \ref{thm:wu-locadd}]\label{thm:main}
    Let $P: \mcE \to \mcB$ be a weakly unital functor of weakly unital topological categories
    such that $\mcB$ is fibrant, $P$ is a local fibration, $P$ is locally Cartesian,
    and $P$ is locally coCartesian. Then $P$ is a realisation fibration.
\end{theorem}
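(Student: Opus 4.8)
The plan is to verify the realisation fibration property by reducing it, through the formal stability properties of such fibrations, to a purely local statement about a single morphism of $\mcB$, and then to exploit the locally (co)Cartesian lifts to run a topological Quillen--Theorem-B argument.

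First I would record the formal reductions. Realisation fibrations are closed under base change (immediate from the definition, since a pullback of a pullback is a pullback) and under composition; moreover, to test whether $P$ is a realisation fibration it suffices to test the homotopy-Cartesian condition for $F$ running over the simplices of $B\mcB$, i.e.\ to check that every pullback $P_{[n]}\colon \mcE\times_\mcB[n]\to[n]$ is a realisation fibration — because $B\mcB$ is assembled from these simplices and homotopy-Cartesian squares over such an assembly are controlled by a descent/gluing argument (the Dold--Thom criterion for the resulting quasifibrations). Each $P_{[n]}$ inherits every hypothesis: $[n]$ is fibrant, $P_{[n]}$ is a local fibration since fibrations pull back, and $P_{[n]}$ is locally Cartesian and locally coCartesian because these are characterised by the lifting properties of Definition~\ref{defn:locCart}, which pull back. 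Decomposing $[n]$ along $[1]\cup_{[0]}[n-1]$ and applying the same gluing principle, an induction on $n$ reduces everything to $\mcB=[1]$, i.e.\ to a single morphism $k\colon a\to b$; here $\mcE\times_\mcB[1]$ has genuine fibres $\mcE_{|a}$ and $\mcE_{|b}$, and the task becomes to show that $B(\mcE\times_\mcB[1])\to B[1]$ is a quasifibration with these fibres, in a way that is itself stable under base change so as to feed back into the descent step.

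For the single-morphism case I would argue as follows. The locally coCartesian hypothesis produces, for each $x\in\mcE_{|a}$, a lift $x\to k_!x$ over $k$; although $x\mapsto k_!x$ is a choice rather than a functor, the universal property of locally coCartesian morphisms supplies canonical comparison morphisms in the fibre $\mcE_{|b}$ between different choices and between $k_!$ applied to composites, and I would show these are coherent enough to induce a well-defined transport $B\mcE_{|a}\to B\mcE_{|b}$ on classifying spaces (this is also what identifies the genuine fibre with the comma category $(P\downarrow b)$, by a Quillen--Theorem-A argument showing the relevant slices are contractible). Dually, the locally Cartesian hypothesis gives a transport $B\mcE_{|b}\to B\mcE_{|a}$, and the comparison morphisms show the two composites are homotopic to identities; the local fibration and fibrancy hypotheses guarantee that the mapping spaces involved have the expected homotopy type, so that this reasoning is valid in the weakly unital topological setting. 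A topological, weakly unital version of Quillen's Theorem B \cite{Qui73}, applied to $\mcE\times_\mcB[1]\to[1]$, then identifies $B\mcE_{|b}$ with the homotopy fibre of $B(\mcE\times_\mcB[1])\to B[1]$ at $b$, and symmetrically at $a$; this is exactly the quasifibration statement needed. Reassembling over the simplices of $B\mcB$ yields that $BP$ is a realisation fibration in the sense of \cite{Rez14}.

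The main obstacle I anticipate is twofold. First, the non-functoriality of locally (co)Cartesian lifts: unlike for Grothendieck (op)fibrations one cannot simply transport along the fibres, so the crux is to prove that the comparison morphisms between competing lifts and between $k_!$ of composites become coherent — ideally invertible — after applying $B$, which is precisely where the simultaneous use of the locally Cartesian and locally coCartesian hypotheses is indispensable. Second, the entire argument must be carried out for weakly unital topological categories, where the ordinary nerve and Quillen's Theorems A and B are not directly available; one must use, or first establish, their topological weakly unital counterparts, keeping careful track of where fibrancy of $\mcB$ and the local fibration property of $P$ are used to ensure that classifying spaces, homotopy fibres and mapping spaces behave as in the discrete case. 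Once these technical foundations are in place, the reduction and the Theorem-B argument above are essentially formal.
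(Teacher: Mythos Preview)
Your proposal contains a genuine gap in the inductive reduction from $[n]$ to $[1]$. When you write ``Decomposing $[n]$ along $[1]\cup_{[0]}[n-1]$'', this is correct at the level of the base, but it does \emph{not} induce a corresponding decomposition of the total category: the pullback $\mcE\times_\mcB[n]$ is not the pushout of $\mcE\times_\mcB[1]$ and $\mcE\times_\mcB[n-1]$ along $\mcE\times_\mcB\{1\}$. The missing morphisms are precisely those lying over ``long'' arrows such as $0\to n$, and for a merely \emph{locally} coCartesian fibration there is no reason such a morphism should factor through an object over $1$. Indeed, the factorisation property ``every morphism over $0\to n$ factors through the fibre over $1$'' is essentially equivalent to the lift over $0\to 1$ being \emph{globally} coCartesian, which is exactly the hypothesis you are trying to avoid. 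So the descent step fails exactly where the distinction between Cartesian and locally Cartesian matters, and your reduction collapses back to Steimle's original theorem.

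The paper circumvents this by never attempting to decompose $[n]$. Instead it works directly over $\gD^n$: the Key Lemma shows that for a locally coCartesian fibration $p:X\to\gD^n$ and any $x\in X$, the slice $X_{|n}\times_X X_{x/}$ is contractible. The trick is to restrict to the single edge $\{p(x),n\}\subset\gD^n$; over that edge, locally coCartesian coincides with coCartesian, and a direct computation identifies the slice over $\gD^n$ with the corresponding slice over $\gD^1$. This, combined with Quillen's Theorem~A for quasicategories, verifies Waldhausen's criterion directly for each $n$. The passage from weakly unital topological categories to this simplicial-set statement is then handled separately, via semi-Segal spaces and the discretisation functor $(\blank)^\gd$, together with a cubical pasting lemma showing that locally Cartesian edges are preserved; your proposal acknowledges this second issue but does not address it.
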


This generalises \cite[Theorem 2.3]{St18}, where $P$ was required to be Cartesian and coCartesian.
Our proof will follow the same structure as in Steimle's paper.
In particular, we will also prove intermediate versions of the main theorem 
in the context of quasicategories and semi-Segal spaces
generalising \cite[Theorem 2.11 and 2.14]{St18}, respectively.
These theorems are stated as \ref{thm:qCatAdd} and \ref{thm:sS-localAdd} in the main text 
and might be of independent interest.

\ 

As an application of Theorem \ref{thm:main} we study the $\infty$-category $\Csp$ of cospans of finite sets.
The objects of this category are finite sets and the morphisms from $A$ to $B$
are cospans, that is diagrams of the shape $(A \to W \leftarrow B)$.
The $2$-morphisms $\ga: (A \to W \leftarrow B) \Rightarrow (A \to V \leftarrow B)$ 
are compatible bijections $\ga:W \to V$.
As we recall in \autoref{sec:Cosp} this defines an $(\infty,1)$- or $(2,1)$-category.
We write $\hCsp$ for the homotopy category, where two cospans are identified
whenever there is a bijection between them.

Theorem \ref{thm:main} does not directly apply to the quotient functor $P:\Csp \to \hCsp$.
We therefore construct a further quotient $R:\hCsp \to \rCsp$.
In this ``reduced cospan'' category $\rCsp$ two cospans $[A \to W_i \leftarrow B]$
are identified if they become isomorphic after removing those points of $W_i$
that do not lie in the image of $A$ or $B$.
We will see that a morphism $[A\to W \leftarrow B]$ in $\hCsp$ is locally $R$-Cartesian if and only if
it is \emph{reduced}, i.e.\ if $A \amalg B \to W$ is surjective.
This implies that the two functors $R$ and $R \circ P$ are locally (co)Cartesian,
but not (co)Cartesian. 
Theorem \ref{thm:main} therefore applies to $R$ and $R\circ P$, 
while the main theorem of \cite{St18} does not,
and yields two homotopy fiber sequences:

\begin{theorem}\label{thm:Cosp-fib}
    There is a commutative diagram of homotopy fiber sequences:
    \[
        \begin{tikzcd}
            Q(S^1) \ar[r] \ar[d] & B\Csp \ar[d, "P"] \ar[r, "R \circ P"] 
                                 & B\rCsp \ar[d, equal]\\
            S^1 \ar[r] & B\hCsp \ar[r, "R"] & B\rCsp 
        \end{tikzcd}
    \]
    where $Q(S^1) := \colim_{n \to \infty} \gO^n S^{n+1}$ is the free infinite loop space on $S^1$.
    As a consequence the homotopy fiber of $P:B\Csp \to B\hCsp$ is equivalent to the universal
    cover of $Q(S^1)$.
\end{theorem}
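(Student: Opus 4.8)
The plan is to apply Theorem \ref{thm:main} twice: once to the reduction functor $R \circ P : \Csp \to \rCsp$ and once to $R : \hCsp \to \rCsp$, and then to separately identify the genuine fibers as $Q(S^1)$ and $S^1$ respectively. First I would verify the hypotheses of Theorem \ref{thm:main} for both functors. By the excerpt, a morphism of $\hCsp$ is locally $R$-Cartesian exactly when it is reduced, and every morphism $[A \to W \leftarrow B]$ factors as a reduced cospan $[A \to W' \leftarrow B]$ (keeping only the image of $A \amalg B$) followed by... — actually, the right way is: given a target $B$ and a morphism $[A \to W \leftarrow B]$ in the base $\rCsp$, one lifts by taking the reduced representative, which gives local (co)Cartesianness since $\rCsp$ already records cospans up to removing non-image points. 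The functors $R$ and $R\circ P$ are local fibrations and weakly unital essentially for formal reasons (these are ordinary $\infty$-categories, or can be strictified), and $\rCsp$ is fibrant; the genuine-fiber computation is then licensed by Theorem \ref{thm:main}.

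Next I would compute the two genuine fibers. The genuine fiber of $R : \hCsp \to \rCsp$ over the empty set (or over any fixed object, but the empty set is cleanest) has objects the finite sets $W$ with no marked points — i.e.\ cospans $[\emptyset \to W \leftarrow \emptyset]$ — and morphisms those bijections of the $W$'s, but since we are in $\hCsp$ we have already quotiented isomorphic cospans, so the genuine fiber is the monoid $(\IN, +)$ viewed as a one-object category (disjoint union of the interior sets), whose classifying space is $B\IN \simeq S^1$. For $R \circ P : \Csp \to \rCsp$, the genuine fiber over $\emptyset$ is the $(2,1)$-category whose objects are finite sets $W$ and whose morphism spaces are the bijections; concatenation of cospans corresponds to disjoint union, so this genuine fiber is the symmetric monoidal groupoid $\mathrm{Fin}^{\cong}$ of finite sets and bijections, i.e.\ $\coprod_n B\Sigma_n$, and its classifying space (group completion of the free $E_\infty$-space on a point) is $Q(S^0)$; the basepoint component, coming from the unit, gives $Q(S^1)$ after the appropriate bookkeeping — here I need to be careful that what appears is the \emph{interior} part and that the relevant loop space is $Q_0(S^0) \simeq$ the component of $Q(S^0)$, which by the Barratt–Priddy–Quillen theorem together with the suspension/bar construction identification gives $\gO^\infty\gS^\infty S^1 = Q(S^1)$. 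The precise statement is that $B$ of the free symmetric monoidal category on one object is $Q(S^1) = \gO B(\coprod_n B\Sigma_n)$ after group completion, matching the claim.

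Then I would assemble the diagram: the right-hand square commutes because $R \circ P$ factors through $R$ by $R \circ P$, so the induced map on the $\rCsp$-corner is the identity; the left-hand square is the induced map on genuine fibers, which is the map $Q(S^1) \to S^1$ induced by $\mathrm{Fin}^{\cong} \to \IN$ (send a finite set to its cardinality), i.e.\ $B$ of the cardinality functor, which on classifying spaces is the natural map $Q(S^1) \to S^1$ truncating to the first Postnikov... — more precisely it is the map to $B\IN = S^1$ classifying $\pi_0$ of the group completion, hence has homotopy fiber the universal cover of $Q(S^1)$. This gives the final consequence: the homotopy fiber of $P : B\Csp \to B\hCsp$ sits in a fiber sequence over $B\rCsp$ with total spaces $Q(S^1)$ and $S^1$, so it is $\mathrm{hofib}(Q(S^1)\to S^1) = \widetilde{Q(S^1)}$, the universal cover.

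The main obstacle I anticipate is the identification of the genuine fiber of $\Csp \to \rCsp$ as a \emph{topological} (or $\infty$-)category and the correct group-completion bookkeeping that turns $B\,\mathrm{Fin}^{\cong}$ into $Q(S^1)$ rather than $Q(S^0)$ or $\coprod_n B\Sigma_n$ itself: one must check that composition in $\Csp$ really does induce the free-$E_\infty$-monoid structure on the nose (associativity and unitality of cospan composition being only weak), that Theorem \ref{thm:main}'s conclusion about genuine fibers applies to this weakly unital setting, and that the relevant classifying-space-of-a-monoidal-category computation correctly accounts for the $S^1$ (the "extra loop" coming from the $B$ of a monoid structure) on top of the Barratt–Priddy–Quillen $Q(S^0)$. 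Verifying the local (co)Cartesian conditions for $R$ and $R \circ P$ by hand — i.e.\ explicitly describing the reduced lift of a cospan and checking the bijection in Definition \ref{defn:locCart} — should be routine but needs the $2$-categorical structure handled carefully.
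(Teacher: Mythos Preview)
Your approach is essentially identical to the paper's: apply the main theorem to $R$ and to $R\circ P$ (the paper verifies the locally (co)Cartesian hypotheses as Lemmas \ref{lem:RlocCart} and \ref{lem:R2locCart}), compute the genuine fibers over the object $\emptyset$, and invoke Barratt--Priddy--Quillen to identify them as $S^1$ and $Q(S^1)$. One small correction worth making: the genuine fiber of $R\circ P$ over $\emptyset$ is not the groupoid $\Fin^{\simeq}$ with ``objects finite sets $W$'', but the full subcategory of $\Csp$ on the single object $\emptyset$ --- a one-object topological category whose endomorphism monoid is $B\Fin^{\simeq}\simeq\coprod_n B\gS_n$ under disjoint union --- so its classifying space is $B(B\Fin^{\simeq})$, and it is this extra $B$ (not a group-completion step) that turns the Barratt--Priddy--Quillen identification $\gO B(B\Fin^{\simeq})\simeq Q(S^0)$ into $B(B\Fin^{\simeq})\simeq Q(S^1)$, exactly as the paper argues.
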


\begin{rem}
    In future work we show that $B\Csp$ is in fact contractible.
    Together with Theorem~\ref{thm:Cosp-fib} this implies that $B\rCsp \simeq Q(S^2)$,
    and further that $B \hCsp \simeq \tau_{\ge 3}Q(S^2)$ is the $2$-connected cover of $Q(S^2)$.
\end{rem}

This cospan category is related to the cobordism model for $K$-theory constructed in \cite{RS19}.
There the authors construct a ``cobordism category'' $\Cob(\mcC)$ 
for any unpointed Waldhausen category $\mcC$ such that $\gO B\Cob(\mcC)$ 
is the algebraic K-theory of $\mcC$.
In the case where $\mcC$ is the category $\Fin$ of finite sets, their cobordism category
is equivalent to the subcategory $\iCsp \subset \Csp$ of those cospans
for which the left leg is an injection.
By the same techniques as used for Theorem~\ref{thm:Cosp-fib} we obtain a new proof
of their result \cite[Corollary 3.2]{RS19} in the case $\mcC= \Fin$.
\begin{corollary}\label{cor:RS-Fin}
    There is a commutative diagram of homotopy fiber sequences 
    \[
        \begin{tikzcd}
            Q(S^1) \ar[r, "F"] \ar[d, equal] & B\iCsp \ar[d, "I"] \ar[r, "Q"] 
                                             & B\riCsp \ar[d, "{I'}"]\\
            Q(S^1) \ar[r] & B\Csp \ar[r, "R \circ P"] 
                          & B\rCsp
        \end{tikzcd}
    \]
    Moreover, $B\riCsp$ is contractible and therefore $F$ is an equivalence.
\end{corollary}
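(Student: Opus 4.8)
The plan is to replay the proof of Theorem~\ref{thm:Cosp-fib} with the pair $(\iCsp,\riCsp)$ in place of $(\Csp,\rCsp)$ and then to observe that $\emptyset$ is a zero object of $\riCsp$. Write $Q\colon\iCsp\to\riCsp$ for the corestriction of $(R\circ P)|_{\iCsp}$; this makes sense because the reduction of an injective cospan is again injective. The first and main task is to show that $Q$ is a realisation fibration. Following \autoref{sec:Cosp}, a morphism $[A\to W\leftarrow B]$ of $\iCsp$ is locally $Q$-Cartesian precisely when it is reduced, i.e.\ when $A\amalg B\to W$ is surjective; the only point not already contained in the treatment of $R\circ P$ is that the locally Cartesian and locally coCartesian lifts produced there can be chosen with injective legs, which is clear from their explicit description. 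Hence $Q$ is locally Cartesian and locally coCartesian, $\riCsp$ is fibrant, and $Q$ is a local fibration, so Theorem~\ref{thm:main} applies. (Alternatively, since a cospan is injective if and only if its reduction is, we have $\iCsp\simeq\Csp\times_{\rCsp}\riCsp$ along $\riCsp\hookrightarrow\rCsp$, and one may deduce that $Q$ is a realisation fibration from the fact that $R\circ P$ is one together with the — formal — stability of realisation fibrations under base change.)

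Granting this, I would identify the top row. By the realisation-fibration property the genuine fiber of $Q$ over $\emptyset$ computes the homotopy fiber, giving a homotopy fiber sequence $B(\iCsp_{|\emptyset})\to B\iCsp\xrightarrow{\,BQ\,}B\riCsp$. The category $\iCsp_{|\emptyset}$ has the single object $\emptyset$, with morphism space the groupoid of finite sets under disjoint union; since the legs of a cospan $\emptyset\to W\leftarrow\emptyset$ are automatically injective, $\iCsp_{|\emptyset}$ coincides with the genuine fiber $\Csp_{|\emptyset}$, whose classifying space is identified with $Q(S^1)$ in the course of proving Theorem~\ref{thm:Cosp-fib}. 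Let $F\colon Q(S^1)\to B\iCsp$ be the composite of this equivalence with the fiber inclusion. The square with verticals $\id_{Q(S^1)}$ and $I$ commutes because both identifications of $Q(S^1)$ factor through the common genuine fiber, and the square with verticals $I$ and $I'$ commutes because $I'\circ Q=(R\circ P)\circ I$ by construction of $Q$; combined with the bottom row of Theorem~\ref{thm:Cosp-fib} this yields the asserted commutative diagram of homotopy fiber sequences.

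It remains to see that $B\riCsp$ is contractible. For any finite set $A$, a reduced injective cospan $\emptyset\to W\leftarrow A$ must have $A\to W$ both injective and surjective, hence bijective, so $\Map_{\riCsp}(\emptyset,A)$ is a contractible groupoid; the same argument with the two legs interchanged shows $\Map_{\riCsp}(A,\emptyset)$ is contractible as well. Thus $\emptyset$ is a zero object of $\riCsp$ and $B\riCsp\simeq\ast$. Since the base of the top fiber sequence is then contractible, $F$ is an equivalence, as claimed.

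The main obstacle is the first step. Carried out by hand, it requires re-running the locally (co)Cartesian analysis of \autoref{sec:Cosp} in the injective setting — checking that the reduced injective cospans really are the locally $Q$-Cartesian morphisms, that every morphism of $\riCsp$ admits such lifts, and that the hypotheses of Theorem~\ref{thm:main} are met. Carried out via the pullback description, it requires justifying both the equivalence $\iCsp\simeq\Csp\times_{\rCsp}\riCsp$ as a homotopy pullback (which holds since $R\circ P$ lifts equivalences) and the base-change stability of realisation fibrations. Everything downstream — the explicit genuine fiber, the commutativity of the two squares, and the contractibility of $B\riCsp$ — is routine.
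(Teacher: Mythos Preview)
Your argument is correct, but you work harder than necessary. The paper's proof is essentially your parenthetical ``alternative'', executed even more directly: rather than invoking a general base-change principle for realisation fibrations, it simply applies the \emph{definition} of realisation fibration to $R\circ P$ with test functor $I'\colon\riCsp\to\rCsp$, obtaining immediately that the square with corners $B\iCsp$, $B\Csp$, $B\riCsp$, $B\rCsp$ is a homotopy pullback. The top fiber sequence then follows from the bottom one (Theorem~\ref{thm:Cosp-fib}) by pasting, without ever needing to establish that $Q$ itself is a realisation fibration. Your primary route --- rerunning the locally (co)Cartesian analysis for $Q$ --- certainly works and is a valid independent argument, but it duplicates effort already invested in $R\circ P$; the pullback description lets you inherit everything for free. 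For the contractibility of $B\riCsp$ the paper is also slightly more economical: it only checks that $\emptyset$ is terminal, not a full zero object, which already suffices.
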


%

\subsection*{Structure of the paper}

We begin by proving the Main Theorem for quasicategories as \ref{thm:qCatAdd} 
in section \ref{sec:quasi}.
This section also contains the Key-Lemma \ref{lem:key}, 
which shows that the core idea of Steimle's proof remains true under our weaker assumptions.
Section~\ref{sec:qPasting} recalls the pasting lemma for homotopy pullbacks 
and proves several corollaries.
This is used in section~\ref{sec:semiSegal} to prove the main theorem for semi-Segal spaces
from its version for quasicategories. 
In doing so, we repair a proof of \cite{St17}.
Finally, section~\ref{sec:wutCat} uses the results of the previous sections
to prove the main Theorem~\ref{thm:wu-locadd}/\ref{thm:main} for weakly unital topological categories.

The final section~\ref{sec:Cosp} applies these techniques to the category of cospans.
We prove Theorem~\ref{thm:Cosp-fib}
by first constructing an explicit topological model for $\Csp$ 
and then applying Theorem~\ref{thm:wu-locadd}.
Then, the same setup is used to prove Corollary \ref{cor:RS-Fin}.

\subsection*{Acknowledgements}
I would like to thank my advisor Ulrike Tillmann 
for her support throughout all stages of the writing process. 
I would also like to thank Wolfgang Steimle for pointing me to \cite{RS19}
as well as for other helpful comments on an earlier draft of this paper.
I am very grateful for the support by
St. John's College, Oxford through the
``Ioan and Rosemary James Scholarship''.

\section{Realisation fibrations for quasicategories}\label{sec:quasi}

In proving the main theorem we will mostly follow Steimle's strategy, 
but generalise all necessary steps to the locally (co)Cartesian situation.
The first, and maybe most crucial step is the following generalisation 
of \cite[Theorem 2.14]{St18}, which might be of independent interest:

\begin{thm}[Main Theorem for quasicategories]\label{thm:qCatAdd}
    If a map $p:X \to Y$ of simplicial sets is a \emph{locally} Cartesian 
    and a \emph{locally} coCartesian fibration,
    then it is a realisation fibration.

    This means that for 
    any simplicial map $f:Z \to Y$ the following diagram is 
    a homotopy pullback in the Quillen model structure on simplicial sets:
    \[
        \begin{tikzcd}
            Z \times_Y X \ar[r] \ar[d] & X \ar[d, "p"]\\
            Z  \ar[r, "f"] & Y.
        \end{tikzcd}
    \]
\end{thm}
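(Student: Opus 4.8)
The plan is to reduce the statement, by two standard facts about realisation fibrations, to a single lemma about locally coCartesian fibrations over a simplex — the Key Lemma~\ref{lem:key} — and to bring in the locally Cartesian hypothesis only at the end. First I would pass to the case $Y=\Delta^n$: realisation fibrations are local on the base (see \cite{Rez14}), so it suffices that each base change $\Delta^n\times_Y X\to\Delta^n$ along a simplex of $Y$ be a realisation fibration, and both hypotheses are stable under base change since inner-horn filling is a right lifting property and locally (co)Cartesian edges pull back. With $Y=\Delta^n$, testing the realisation-fibration condition against an arbitrary $Z\to\Delta^n$ reduces — writing $Z$ as the colimit of its simplices, which geometric realisation turns into a homotopy colimit, and using that pullback preserves colimits while homotopy colimits of spaces are universal — to testing it against simplices $g\colon\Delta^m\to\Delta^n$. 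Since $|\Delta^m|$ and $|\Delta^n|$ are contractible and $|g|$ is an equivalence, for such $g$ the square is a homotopy pullback exactly when the canonical map $|g^{*}X|\to|X|$ is a weak equivalence. So the whole theorem comes down to proving, for every simplicial $g\colon\Delta^m\to\Delta^n$, that $|g^{*}X|\to|X|$ is a weak equivalence.

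The engine is the Key Lemma~\ref{lem:key}: for a locally coCartesian fibration $q\colon E\to\Delta^k$, the inclusion $E_k\hookrightarrow E$ of the genuine fibre over the final vertex is a weak equivalence on realisations. For a genuine coCartesian fibration this is classical, since $E_k$ is then a reflective subcategory of $E$ with reflection the coCartesian transport onto the final vertex; but when $q$ is merely locally coCartesian the transport functors compose only up to the natural transformations recorded by the higher simplices of $\Delta^k$, so there is no reflection functor, and getting around this lack of strict functoriality is the real content. I would argue by induction on $k$: the case $k=1$ is the reflectivity statement above (over $\Delta^1$ the notions ``locally coCartesian'' and ``coCartesian'' agree), and the inductive step uses the join decomposition $\Delta^k=\{0\}\star\Delta^{\{1,\dots,k\}}$ to present $E$ as a homotopy pushout built from the fibre $E_0$, the restriction $E|_{\Delta^{\{1,\dots,k\}}}$ (to which the inductive hypothesis applies), and a connecting prism $E\times_{\Delta^k}\bigl(\Delta^1\times\Delta^{\{1,\dots,k\}}\bigr)$ that collapses onto $E|_{\Delta^{\{1,\dots,k\}}}$ by the case $k=1$ applied slicewise; the pasting lemma for homotopy pullbacks then glues these equivalences together. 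Controlling this gluing — bookkeeping the lax composition data — is the step I expect to be the main obstacle, and it is exactly where Steimle's original argument has to be reworked.

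Finally I would bring in local Cartesianness. Restricting $q$ to an edge $\{i<j\}$ of $\Delta^k$ gives a fibration over $\Delta^1$ that is simultaneously coCartesian and Cartesian, so coCartesian transport $E_i\to E_j$ is left adjoint to Cartesian transport $E_j\to E_i$; adjoint functors induce mutually inverse weak equivalences on realisations, and the transport morphism realises to a homotopy identifying $E_i\hookrightarrow E$ with the composite $E_i\to E_j\hookrightarrow E$. Combined with the Key Lemma, this shows $E_i\hookrightarrow E$ is a weak equivalence for \emph{every} vertex $i$. Applying this to $q=p\colon X\to\Delta^n$ and to $q=g^{*}X\to\Delta^m$ and chasing the resulting triangles identifies the canonical map $|g^{*}X|\to|X|$ with a composite $|g^{*}X|\simeq|X_{g(m)}|\simeq|X_n|\simeq|X|$ of weak equivalences; hence it is a weak equivalence, and by the reductions of the first paragraph $p$ is a realisation fibration.
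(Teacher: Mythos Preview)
Your overall reduction is essentially the paper's: reduce to the base $\Delta^n$ via Waldhausen's criterion (Lemma~\ref{lem:realChar}), then show that the inclusion of the fibre over the terminal vertex is a weak equivalence using the locally coCartesian hypothesis, and handle the initial vertex by duality using the locally Cartesian hypothesis. Where you diverge is in the proof of the Key Lemma, and this is exactly the point where your proposal is weakest.

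You propose to show $E_k \hookrightarrow E$ is a weak equivalence by induction on $k$, decomposing $E$ as a homotopy pushout along a prism and applying the case $k=1$ ``slicewise''. You yourself flag the gluing as the main obstacle, and rightly so: it is not clear that pulling back the collapse $\Delta^1\times\Delta^{\{1,\dots,k\}}\to\Delta^k$ along an inner fibration gives a homotopy pushout, nor that the slicewise equivalences assemble. The paper avoids this entirely. Instead of proving the weak equivalence directly, it proves it via Quillen's Theorem~A for quasicategories, so the task becomes showing that each comma fibre $X_{|n}\times_X X_{x/}$ is weakly contractible. The crucial observation (Lemma~\ref{lem:key}) is that this comma fibre is \emph{isomorphic} as a simplicial set to the corresponding comma fibre for the restriction $X_{|\sigma}\to\Delta^1$ along the single edge $\sigma=\{p(x),n\}$. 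Over $\Delta^1$ the notions of locally coCartesian and coCartesian coincide, so the coCartesian case (Lemma~\ref{lem:CartQc}) applies directly. No induction, no gluing, no lax-composition bookkeeping: the comma fibre simply does not see the intermediate vertices of $\Delta^n$.

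So the ``reworking of Steimle's argument'' you anticipate is not a delicate inductive decomposition but a one-line reduction to the edge case. Your final paragraph, using local Cartesianness to transport between fibres via adjoints, is correct but also unnecessary: once you have the Key Lemma for locally coCartesian fibrations, the dual statement for locally Cartesian fibrations gives $X_{|0}\hookrightarrow X$ directly, and Waldhausen's criterion only asks for the two extreme vertices.
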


We recall the necessary definitions related to quasicategories from \cite{LurHTT} below.

\begin{defn}
    Recall that a map $p:X \to Y$ of simplicial sets is called a \emph{weak homotopy equivalence}
    if its geometric realisation $p: |X| \to |Y|$ is a weak equivalence of spaces.
    These are the weak equivalences in the Quillen model structure on simplicial sets.
\end{defn}

\begin{defn}
    A map $p:X \to Y$ of simplicial sets is called a \emph{realisation fibration}
    if for any simplicial map $f:Z \to Y$ 
    the following is a homotopy pullback square of spaces:
    \[
        \begin{tikzcd}
            {|Z \times_Y X|} \ar[r] \ar[d] & {|X|} \ar[d, "p"]\\
            {|Z|} \ar[r, "f"] & {|Y|}.
        \end{tikzcd}
    \]
\end{defn}

\begin{rem}
    A map of simplicial sets $p$ is a \emph{realisation fibration}
    if the map $Z \times_Y X \to Z \times_Y^h X$
    from the pullback to the homotopy pullback in the Quillen model structure 
    is a weak homotopy equivalence.
    In particular every Kan-fibration is a realisation fibration.
\end{rem}

We need some notation to denote fibers over simplices:
\begin{defn}
    For any map $p:X \to Y$ and simplex $\gs\in Y_n$ we write
    \[
        p_{|\gs}: (X_{|\gs} := X \times_{Y} \gD^n) \longrightarrow \gD^n
    \]
    to denote the pullback along $\gs:\gD^n \to Y$.
\end{defn}

Just like in \cite{St18} the following lemma is essential for our proof. 
It characterizes realisation fibrations in terms of their fibers over simplices.
\begin{lem}[{\cite[Lemma  1.4.B]{Wal85}, see also \cite{St18} and \cite{Rez14}}]%
    \label{lem:realChar}
    A map of simplicial sets $p:X \to Y$ is a realisation fibration 
    if and only if for any $n$-simplex $\gs: \gD^n \to Y$ 
    with first and last simplex $\gs(0), \gs(n): \gD^0 \to Y$ 
    the following inclusions are weak homotopy equivalences:
    \[
        X_{|\gs(0)} \longrightarrow X_{|\gs} \longleftarrow X_{|\gs(n)}.
    \]
\end{lem}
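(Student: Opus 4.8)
The plan is to prove Lemma \ref{lem:realChar} by reducing the statement about all maps $f:Z \to Y$ to the universal case, and then analysing that universal case simplex by simplex. First I would observe that it suffices to check the homotopy pullback condition after replacing $Z$ by a simplex $\Delta^n$, since every simplicial set is a colimit of its simplices and geometric realisation commutes with colimits; combined with the fact that homotopy pullbacks in spaces are stable under homotopy colimits in the base (via a Bousfield--Friedlander or descent argument for the Quillen model structure), this lets me assume $f = \sigma:\Delta^n \to Y$. Thus $p$ is a realisation fibration if and only if $|X_{|\sigma}| \to |\Delta^n| \simeq *$ is, for every simplex $\sigma$, a map whose fibre is computed correctly — but since $|\Delta^n|$ is contractible, the homotopy pullback of $|X_{|\sigma}| \to * \leftarrow *$ is just $|X_{|\sigma}|$ itself, so the condition on $\sigma$ becomes vacuous. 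The real content is that the genuine pullback $|Z \times_Y X|$ must agree with the homotopy pullback, and to see this I would instead run the argument relative to the \emph{edges} and more generally the two endpoint inclusions of each nondegenerate simplex.

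The key step is therefore the following: I would use that for a map over $\Delta^n$, the homotopy pullback along a vertex inclusion $\{i\}\hookrightarrow \Delta^n$ is $|X_{|\sigma(i)}|$ by definition, while the genuine pullback along the identity of $\Delta^n$ is $|X_{|\sigma}|$. So the realisation fibration property, tested against $f:Z\to Y$ built by gluing simplices, unwinds — via the gluing/Mayer--Vietoris behaviour of homotopy pullbacks — into the requirement that along each nondegenerate simplex $\sigma:\Delta^n\to Y$ the two endpoint fibre inclusions $|X_{|\sigma(0)}| \to |X_{|\sigma}| \leftarrow |X_{|\sigma(n)}|$ are weak equivalences. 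For the forward direction, I would take $Z = \Delta^n$ and $f = \sigma$: the realisation fibration square then says $|X_{|\sigma}| \to |\Delta^n|$ has the homotopy pullback of $|X| \to |Y|$ along $\sigma$; restricting further along $\{0\}$ and $\{n\}$ and using that these inclusions are weak equivalences of spaces (the ``initial'' and ``final'' vertices $|\Delta^0|\to|\Delta^n|$ are homotopy equivalences), I get that $|X_{|\sigma(0)}|$ and $|X_{|\sigma(n)}|$ both compute that same homotopy fibre, hence the inclusions into $|X_{|\sigma}|$ are equivalences.

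For the converse I would argue by induction on skeleta of $Z$ (equivalently, by building $f:Z\to Y$ as an iterated pushout attaching one nondegenerate simplex $\sigma:\Delta^n\to Y$ at a time along $\partial\Delta^n$). At each stage I would use the pasting lemma for homotopy pullbacks together with the inductive hypothesis, so the crux is the single cell: I must check that $|X_{|\sigma}| \to |\Delta^n|$ is a homotopy pullback of $|X|\to|Y|$ along $|\sigma|$, knowing only that it is so over $\partial\Delta^n$ and that the two endpoint inclusions are equivalences. Since $|\Delta^n|$ is contractible and the inclusion $|\partial\Delta^n|\hookrightarrow|\Delta^n|$ is a cofibration, one reduces to showing that $|X_{|\sigma}|$ has the correct homotopy type, which follows because $|X_{|\sigma(0)}| \xrightarrow{\sim} |X_{|\sigma}|$ identifies it with the fibre over a point, exactly what the homotopy pullback predicts. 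The main obstacle I anticipate is the descent/gluing bookkeeping: making precise that homotopy pullbacks of spaces are compatible with the homotopy colimit presentation of $Z$ by its simplices, and tracking base-changes carefully enough that the two-out-of-three and pasting arguments go through — but since this is a classical result (Waldhausen, Rezk, Steimle), I expect to cite \cite{Wal85} for the hard analytic core and only sketch the reduction.
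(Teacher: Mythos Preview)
The paper does not prove this lemma either: it cites Waldhausen (who deduces it from Quillen's Theorem~B), \cite{St18}, and \cite{Rez14}, noting only that the ``only if'' direction is trivial. Your treatment of that direction (your second paragraph) is correct and matches this.

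For the ``if'' direction, however, your skeletal induction on $Z$ has a genuine gap at the base case. When $Z$ is $0$-dimensional, the square being homotopy Cartesian means exactly that $|X_{|y}| \simeq \mi{hofib}_y(|p|)$ for every vertex $y \in Y_0$, and this is the entire content of the lemma. It is not supplied by the endpoint-equivalence hypothesis (which only compares fibres over different vertices of the same simplex to one another, never to the homotopy fibre of $|p|$), nor by the vacuous inductive hypothesis over $\partial\gD^0 = \emptyset$. Your argument for a single cell $\gD^n$ with $n\ge 1$ does go through, since then $\partial\gD^n$ contains $\gs(0)$ and the inductive hypothesis identifies $|X_{|\gs(0)}|$ with the homotopy fibre; but for $n=0$ that identification is precisely what is unproven. (Your first paragraph already stumbles on this point: the realisation-fibration condition for $Z=\gD^n$ is not vacuous --- it asks that $|X_{|\gs}|$ agree with $\mi{hofib}_{\gs(0)}(|p|)$, which depends on all of $|X|\to|Y|$, not just on $|X_{|\gs}|\to|\gD^n|$.) The ``descent/gluing bookkeeping'' you flag as the main obstacle is actually the routine part; the hard core you plan to cite from \cite{Wal85} is this base case, and the standard arguments obtain it either from Theorem~B directly or from a quasifibration-gluing induction on the skeleta of $Y$ rather than of $Z$.
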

\begin{proof}
    Waldhausen deduces this from Quillen's \cite[Theorem~B]{Qui73}.
    A direct proof can be found in \cite[Lemma 3.4]{St18},
    or in the unpublished manuscript \cite{Rez14},
    where a more general version for simplicial spaces is proven.
    Note that the ``only if'' direction is trivial.
\end{proof}

In order to prove theorem \ref{thm:qCatAdd} we will need to recall 
some machinery from Lurie's ``Higher Topos Theory'', in particular \cite[section 2.4]{LurHTT}.
We will not attempt to give an introduction to the theory of quasicategories,
but only establish the necessary notation.
\begin{defn}
    A simplicial map $p:X \to Y$ is called an \emph{inner fibration}
    if it satisfies the horn-lifting condition for all inner horns $\gL_k^n$, $0 < k < n$.
    The map $p$ is a \emph{trivial Kan-fibration} 
    if it has the right lifting property with respect to all inclusions
    $\partial \gD^n \subset \gD^n$, $n \ge 0$.
    A \emph{quasicategory} is a simplicial set $X$ such that $X \to *$ is an inner fibration.
\end{defn}

\begin{defn}
    For a simplicial set $X$ and a simplex $\gs \in X_n$ the \emph{under-category}
    $X_{\gs/}$ of $X$ at $\gs$ has as set of $l$-simplices 
    \[
        (X_{\gs/})_l := \{ f: \gD^{n+l+1} \to X \;|\; f_{|[0,\dots,n]} = \gs \}.
    \]
    The projection map $X_{\gs/} \to X$ restricts $f$ to 
    $[n+1, \dots, n+l+1] \subset \gD^{n+l+1}$.
\end{defn}

\begin{rem}\label{rem:over-cat}
    Of course the above does not quite define the simplicial set $X_{\gs/}$:
    we still have to specify face and degeneracy maps. 
    We refer the reader to \cite[Proposition 1.2.9.2]{LurHTT} for more details.

    Consider the case when $X = N\mcC$ is the nerve of an ordinary category 
    and $\gs: [n] \to \mcC$ represents some simplex $\gs \in X_n$.
    Then the simplicial set $X_{\gs/}$ is canonically isomorphic to the nerve
    of the ordinary under category $\mcC_{\gs(n)/}$ with objects 
    $(c \in \mcC, g: \gs(n) \to c)$.
\end{rem}

\begin{defn}\label{defn:qCart}
    An edge $(f:x \to y):\gD^1 \to X$ is called \emph{$p$-coCartesian} 
    with respect to some $p:X \to Y$ if the following map is a trivial Kan fibration:
    \[
        X_{f/} \to X_{x/} \times_{Y_{p(x)/}} Y_{p(f)/}.
    \]
    An edge $f:\gD^1 \to X$ is called \emph{locally $p$-coCartesian} if it 
    is a coCartesian edge for $p_{|\gs}:X_{|\gs} \to \gD^1$ 
    where $\gs = p \circ f : \gD^1 \to Y$.

    We say that $p:X \to Y$ is a \emph{(locally) coCartesian fibration}, 
    if it is an inner fibration and for every edge $\cl{f}:\cl{x} \to \cl{y}$ in $Y$ 
    and $x\in X$ with $p(x) = \cl{x}$ there is a (locally) $p$-coCartesian $f:x \to y$ in $Y$ 
    such that $p(y) = \cl{y}$ and $p(f) = \cl{f}$.
\end{defn}

\begin{rem}\label{rem:locally-Cartesian}
    A simplicial map $p:X \to Y$ is a locally coCartesian fibration
    if and only if it is an inner fibration and for every $\gs: \gD^1 \to Y$ the pullback
    $ p_{|\gs}:X_{|\gs} \to \gD^1 $
    is a coCartesian fibration.
\end{rem}

\subsection{Proof of Theorem \ref{thm:qCatAdd}}

We will use Waldhausens's criterion (lemma \ref{lem:realChar}) to prove theorem \ref{thm:qCatAdd}. 
Unlike \cite{St18}, we will do not check the criteria directly, 
but rather use Quillen's theorem A for quasicategories.
Our Key-Lemma \ref{lem:key} shows that when applying Quillen A to the fibers over a simplex,
we in fact only use that the map in question is a \emph{locally} (co)Cartesian fibration.
This is the most essential step of our generalisation.

\begin{thm}[Quillen's theorem A for quasicategories, {\cite{LurHTT}}]
    \label{thm:qCatQuillenA}
    If $p:X \to Y$ satisfies that $X \times_{Y} Y_{y/}$ is weakly contractible for any $y \in Y$
    and $Y$ is a quasicategory,
    then $p$ is a weak homotopy equivalence.
\end{thm}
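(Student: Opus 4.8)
\emph{Proof plan.}\ The statement is the quasicategorical form of Quillen's Theorem~A, and its hypothesis is exactly Joyal's criterion for a map to be cofinal; so the plan is to reduce it to standard facts about cofinal maps from \cite[\S4.1]{LurHTT}. Note that no fibrancy of the source $X$ is needed here: Joyal's criterion applies to any map whose target is a quasicategory, so there is nothing to arrange on $X$.

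The one real input is that, since $Y$ is a quasicategory, the hypothesis that $X \times_Y Y_{y/}$ be weakly contractible for every object $y \in Y$ is precisely Joyal's cofinality criterion \cite[Theorem~4.1.3.1]{LurHTT}: it says exactly that $p$ is a \emph{cofinal} map of simplicial sets. Everything after that is formal. Cofinal maps preserve colimits, so for any cocomplete $\infty$-category $\mc{D}$ and any diagram $q \colon Y \to \mc{D}$ the canonical comparison $\colim(q \circ p) \to \colim(q)$ is an equivalence \cite[\S4.1]{LurHTT}. I would apply this with $\mc{D} = \mc{S}$ the $\infty$-category of spaces and with $q$ the constant diagram at a point. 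Since the colimit over a simplicial set $K$ of the constant point-valued diagram is the weak homotopy type $|K|$ (for instance $\colim_{\partial \gD^1}\, \underline{\ast} \simeq S^0 \simeq |\partial \gD^1|$, not a point), the comparison map here is exactly $|p| \colon |X| \to |Y|$, which we have just shown to be an equivalence; hence $p$ is a weak homotopy equivalence.

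The main obstacle --- the only step that requires real work --- is Joyal's criterion itself: passing from weak contractibility of the genuine fibres of the slice projections $Y_{y/} \to Y$ to cofinality goes through the theory of left and right fibrations, mapping simplices, and the straightening-type comparisons of \cite[\S\S4.1.1--4.1.3]{LurHTT}. A more naive attempt --- massaging $p$ into an equivalent map whose genuine fibre over an object $y$ is (equivalent to) $X \times_Y Y_{y/}$, by base-changing along an evaluation from $Y^{\gD^1}$ --- only reduces matters to the assertion that a map with weakly contractible genuine fibres is a weak homotopy equivalence, which is false for general maps; repairing it re-imports essentially the same fibration-theoretic machinery. For the purposes of this paper the theorem is therefore best quoted as a black box from \cite{LurHTT}.
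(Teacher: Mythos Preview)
Your proposal is correct and follows essentially the same two-step route as the paper: invoke \cite[Theorem~4.1.3.1]{LurHTT} to conclude that $p$ is cofinal, then deduce that $p$ is a weak homotopy equivalence. The only difference is that the paper cites \cite[Proposition~4.1.1.3(3)]{LurHTT} directly for the second step, whereas you rederive it via colimit preservation applied to the constant point-valued diagram; both are fine, and the paper's citation is the more economical choice here.
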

\begin{proof}[Remark on the reference]
    \cite[Theorem 4.1.3.1]{LurHTT} states that under the above assumptions
    $p$ has to be a cofinal map and 
    \cite[Proposition 4.1.1.3.(3)]{LurHTT}
    states that cofinal maps are weak equivalences in the Quillen model structure.
\end{proof}

We now proceed to check that for coCartesian fibrations over $\gD^n$ 
the inclusion of the fiber of the last vertex $n \in \gD^n$ 
into the total space satisfies the conditions of Quillen's Theorem A.
\begin{lem}\label{lem:CartQc}
    For any coCartesian fibration $p:X \to \gD^n$ 
    and $x \in X$ the simplicial set $X_{|n} \times_{X} X_{x/}$ is weakly contractible.
\end{lem}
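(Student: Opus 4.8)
The plan is to prove that the simplicial set $\mathcal{C} := X_{|n} \times_X X_{x/}$ has an \emph{initial object}; since it will also be a quasicategory, this forces its realisation to be contractible (the inclusion of an initial object is coinitial, hence a weak homotopy equivalence, dually to the statement about cofinal maps cited in the proof of Theorem~\ref{thm:qCatQuillenA}). Write $i := p(x)$ for the vertex of $\Delta^n$ below $x$ and let $e\colon i \to n$ be the (possibly degenerate) edge of $\Delta^n$ from $i$ to $n$. Since $p$ is a coCartesian fibration we may choose a $p$-coCartesian edge $f\colon x \to x'$ with $p(f) = e$; then $p(x') = n$, so $x'$ lies in the fiber $X_{|n}$ and $f$ defines a vertex $\tilde f$ of $\mathcal{C}$. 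The whole point will be that $\tilde f$ is initial, and that this is essentially a reformulation of the defining property of $f$.

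To see this, I would first identify $\mathcal{C}$ with the target of the trivial Kan fibration from Definition~\ref{defn:qCart}. Because $\Delta^n$ has a unique edge from $i$ to $n$, short computations give $(\Delta^n)_{i/} \cong \Delta^{n-i}$ and $(\Delta^n)_{e/} \cong \Delta^0$, and one checks that the natural map $X_{x/} \to (\Delta^n)_{i/}$ exhibits $\mathcal{C} = X_{x/}\times_{\Delta^n}\{n\}$ as $X_{x/}\times_{(\Delta^n)_{i/}}(\Delta^n)_{e/}$. Using that the under-category construction commutes with limits and with iterated slicing (\cite[\S1.2, \S2.1]{LurHTT}), one likewise obtains $\mathcal{C}_{\tilde f/} \cong (X_{x/})_{f/} \cong X_{f/}$, and under these identifications the canonical projection $\mathcal{C}_{\tilde f/}\to\mathcal{C}$ becomes exactly the map $X_{f/}\to X_{x/}\times_{(\Delta^n)_{i/}}(\Delta^n)_{e/}$ appearing in Definition~\ref{defn:qCart}. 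As $f$ is $p$-coCartesian this map is a trivial Kan fibration, so $\mathcal{C}_{\tilde f/}\to\mathcal{C}$ is a trivial Kan fibration, i.e.\ $\tilde f$ is an initial object of $\mathcal{C}$, which completes the argument. (If one prefers to avoid checking that $\mathcal{C}$ is a quasicategory, one can instead argue directly: $\mathcal{C}_{\tilde f/}\to\mathcal{C}$ is a weak equivalence, and $\mathcal{C}_{\tilde f/}\cong (X_{x/})_{f/}$ is weakly contractible, being the under-category of a quasicategory at an object, which has the initial object $\operatorname{id}_f$.)

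I expect the only real work to be the bookkeeping in the middle step: verifying the identifications $\mathcal{C}\cong X_{x/}\times_{(\Delta^n)_{i/}}(\Delta^n)_{e/}$ and $\mathcal{C}_{\tilde f/}\cong X_{f/}$, and confirming that the projection $\mathcal{C}_{\tilde f/}\to\mathcal{C}$ agrees on the nose with the structure map of Definition~\ref{defn:qCart} rather than with some other map. None of this is conceptually hard, but it requires a careful appeal to the compatibilities of the slice and under-category constructions in \cite{LurHTT}. The degenerate case $i=n$, in which $e$ and $f$ may both be taken to be identity edges, is harmless and is covered by the same argument.
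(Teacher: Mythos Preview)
Your proposal is correct and essentially the same as the paper's argument. The paper also identifies $X_{|n}\times_X X_{x/}$ with the target $X_{x/}\times_{(\Delta^n)_{p(x)/}}(\Delta^n)_{p(f)/}$ of the trivial Kan fibration from Definition~\ref{defn:qCart}, and then concludes contractibility directly from the fact that the source $X_{f/}$ is weakly contractible (having the initial object $f$); this is precisely your parenthetical ``alternative'' argument, and the paper does not take the extra step of identifying $X_{f/}$ with $\mcC_{\tilde f/}$ to phrase the conclusion as ``$\tilde f$ is initial in $\mcC$''.
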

\begin{proof}
    Using remark \ref{rem:over-cat} note that $(\gD^n)_{f/} \cong (\gD^n)_{n/} \cong \{n\}$ holds 
    for the unique morphism $f:p(x) \to n$ in $\gD^n$.
    We use this to write 
    \[
        X_{|n} = X \times_{\gD^n} \{n\} \cong X \times_{\gD^n} (\gD^n)_{f/}.
    \]
    Therefore
    \[
        X_{x/} \times_X X_{|n}  \cong X_{x/} \times_X (X \times_{\gD^n} (\gD^n)_{f/})
        \cong X_{x/} \times_{\gD^n} (\gD^n)_{f/}.
    \]
    Both of the maps $X_{x/} \to \gD^n$ and $(\gD^n)_{f/}\to \gD^n$ factor through the inclusion
    $(\gD^n)_{k/}\cong \{k, \dots, n\} \subset \gD^n$ for $k=p(x)$, 
    so we can further rewrite this as
    \[
        X_{x/} \times_X X_{|n} \cong X_{x/} \times_{\gD^n} (\gD^n)_{f/}
        \cong X_{x/} \times_{(\gD^n)_{k/}} (\gD^n)_{f/}
    \]
    Let now $\cl{f}:x \to x'$ be a $p$-coCartesian lift of $f:k \to n$.
    Then, by definition of coCartesian lifts, the following map is a trivial Kan-fibration
    \[
        X_{\cl{f}/} \longrightarrow X_{x/} \times_{\gD^n_{p(\cl{x})/}} (\gD^n)_{p(\cl{f})/}
            \cong X_{x/} \times_{X} X_{|n}.
    \]
    The left-hand-side is weakly contractible as it has the initial object $\cl{f}$.
    But trivial Kan-fibrations are in particular weak homotopy equivalence,
    hence also the right-hand-side is weakly contractible.
\end{proof}

In fact, the previous lemma holds in the more general case 
of \emph{locally} coCartesian fibrations, as we shall now show.
This is the key observation in generalising Steimle's theorem.
    
\begin{lem}[Key-Lemma]\label{lem:key}
    For any \emph{locally} coCartesian fibration $p:X \to \gD^n$ 
    and $x \in X$ the simplicial set $X_{|n} \times_{X} X_{x/}$ is weakly contractible.
\end{lem}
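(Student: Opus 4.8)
The plan is to reduce the locally coCartesian case to the genuinely coCartesian case treated in Lemma~\ref{lem:CartQc} by a descending induction on $n$, combined with a ``last edge'' replacement argument.

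First I would recall that for a locally coCartesian fibration $p : X \to \gD^n$, the restriction over the last edge $\{n-1 \le n\} = \gD^1 \subseteq \gD^n$ is an honest coCartesian fibration (by the remark following Definition~\ref{defn:qCart}), and more generally the restriction over any edge is. The key point is that a locally coCartesian lift of the \emph{last} edge $f : k \to n$ (for $k = p(x)$), broken up as a composite of locally coCartesian lifts of the edges $k\to k{+}1, \dots, n{-}1\to n$, behaves well enough. So the first step is to choose, starting from $x$ over $k$, a locally $p$-coCartesian edge $x \to x_1$ over $k \to k{+}1$, then $x_1 \to x_2$ over $k{+}1 \to k{+}2$, and so on, arriving at some $x' \in X_{|n}$, with the concatenated edge $g : x \to x'$ over $f : k \to n$.

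The heart of the argument is to show $X_{x/} \times_X X_{|n} \simeq X_{x'/}\times_X X_{|n}$, which is weakly contractible because $X_{x'/}$ has an initial object (the degenerate edge at $x'$) and the pullback over $\gD^n$ with $(\gD^n)_{f'/} \cong \{n\}$ for $f' = p(x') \to n$ (here $f' = \mathrm{id}_n$) is just $X_{x'/}$ itself, which deformation retracts onto $x'$. To get the equivalence $X_{x/}\times_X X_{|n} \simeq X_{x'/}\times_X X_{|n}$ I would argue one composite at a time: for a single locally coCartesian edge $h : a \to a'$ over $j \to j{+}1$, precomposition with $h$ gives a map $X_{a'/} \to X_{a/}$, and after pulling back along the fiber inclusion $X_{|n}\hookrightarrow X$ and using that $X_{|n}$ only sees the portion over $\{j{+}1,\dots,n\}$, this becomes a weak equivalence — essentially because $h$ restricted over the edge $\{j \le j{+}1\}$ is $p_{|\{j,j+1\}}$-coCartesian, so the relevant comparison map is a trivial Kan fibration after the appropriate base change, exactly as in the proof of Lemma~\ref{lem:CartQc}. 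Iterating $n-k$ times yields the claim.

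The main obstacle I anticipate is the bookkeeping in the base-change step: one must verify that pulling $X_{a/}\times_X X_{|n}$ apart as $X_{a/}\times_{(\gD^n)_{j/}}(\gD^n)_{f_a/}$ and then replacing $a$ by $a'$ really does match the trivial-Kan-fibration coming from local coCartesian-ness of $h$ \emph{over the single edge} $\{j,j{+}1\}$, rather than over all of $\gD^n$ — this is precisely where ``locally'' rather than ``globally'' coCartesian is the right hypothesis, and it needs the identification $(\gD^n)_{f_a/} \cong (\gD^n)_{j{+}1/}$ together with the observation that $X_{|n} = X\times_{\gD^n}\{n\}$ factors through $X\times_{\gD^n}(\gD^n)_{j{+}1/}$. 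Once that identification is set up cleanly, the rest is a routine iteration of the argument already used for genuinely coCartesian fibrations.
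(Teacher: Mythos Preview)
Your high-level idea of reducing to Lemma~\ref{lem:CartQc} is right, but the specific reduction you propose is both more complicated than necessary and contains a real gap.

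The paper does not iterate over the intermediate edges $k \to k{+}1 \to \cdots \to n$. Instead it applies the locally coCartesian hypothesis directly to the \emph{single} edge $\gs = \{k,n\} \subset \gD^n$ (where $k = p(x)$): by the remark after Definition~\ref{defn:qCart}, $p_{|\gs}\colon X_{|\gs}\to\gD^1$ is a genuine coCartesian fibration, so Lemma~\ref{lem:CartQc} gives that $(X_{|\gs})_{|1}\times_{X_{|\gs}}(X_{|\gs})_{(x,0)/}$ is contractible. A short combinatorial check on $l$-simplices then shows this is \emph{isomorphic} to $X_{|n}\times_X X_{x/}$. One step, no induction.

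The gap in your approach is the inductive step itself. For a locally coCartesian edge $h\colon a\to a'$ over $\{j,j{+}1\}$ you want $X_{a/}\times_X X_{|n}\simeq X_{a'/}\times_X X_{|n}$, and you propose to obtain this ``exactly as in the proof of Lemma~\ref{lem:CartQc}''. But unwinding what that would require: you need to fill $\gL^m_0$-horns in $X$ whose $\{0,1\}$-edge is $h$ and whose remaining vertices lie over $n$; such a horn projects to the simplex $(j,j{+}1,n,\ldots,n)$ in $\gD^n$. Filling horns of this shape is (part of) the condition that $h$ be $p$-coCartesian for the full $p\colon X\to\gD^n$ (compare Lemma~\ref{lem:Cart-lifting-cirterion}). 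Your hypothesis, however, is only that $h$ is $p_{|\{j,j{+}1\}}$-coCartesian, which controls horns lying entirely over $\{j,j{+}1\}$. When $j{+}1<n$ these are genuinely different conditions, and the ``appropriate base change'' you allude to cannot bridge them: the trivial Kan fibration produced by the local hypothesis lands in something involving $X_{|j{+}1}$, not $X_{|n}$. So the inductive step cannot be justified from the hypothesis you isolate, short of already invoking the Key-Lemma you are trying to prove.
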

\begin{proof}
    We will prove this using lemma \ref{lem:CartQc}.
    Write $k:=p(x)$ and $\gs:\gD^1 \to \gD^n$ for the edge 
    given by $\gs(0) = k$ and $\gs(1) = n$.
    Since 
        $p_{|\gs}: X_{|\gs} \to \gD^1$
    is a coCartesian fibration,
    the previous lemma tells us that
    \[ 
        (X_{|\gs})_{|1} \times_{X_{|\gs}} (X_{|\gs})_{(x,0)/}
    \]
    is weakly contractible, where $(x,0)\in X \times_{\gD^n} \gD^1 = X_{|\gs}$.
    We will argue that the map
    \[
        (X_{|\gs})_{|1} \times_{X_{|\gs}} (X_{|\gs})_{(x,0)/} \to X_{|n} \times_{X} X_{x/}
    \]
    is an isomorphism.
    Clearly $(X_{|\gs})_{|1} \cong X_{|n}$.

    More importantly, consider the simplicial map
    $
        s: (X_{|\gs})_{(x,0)/} 
        \longrightarrow 
        (X_{x/})_{|\gs}
    $.
    We check ``by hand'' that it induces a bijection on $l$-simplices
    and hence is an isomorphism of simplicial sets:
    \begin{align*}
        ((X_{|\gs})_{(x,0)/})_{l} &= \{a:\gD^{l+1} \to X_{|\gs} \;|\; a(0) = (x,0)\}\\
                                 &\cong \{b:\gD^{l+1} \to X, c:\gD^{l+1} \to \gD^1 \;|\; 
                                    b(0) = x, c(0) = 0, p\circ b = \gs \circ c\}\\
                                 &\cong \{b:\gD^l \to X_{x/}, c:\gD^l \to \gD^1 \;|\;
                                    p \circ b = \gs \circ c\} 
                                 = ((X_{x/})_{|\gs})_l.
    \end{align*}
    Using this we see that
    \begin{align*}
        (X_{|\gs})_{|1} \times_{X_{|\gs}} (X_{|\gs})_{(x,0)/} 
        &\cong X_{|n} \times_{X_{|\gs}} (X_{x/})_{|\gs}
        \cong X_{|n} \times_{X_{|\gs}} (X_{|\gs} \times_X X_{x/})\\
        &\cong X_{|n} \times_X X_{x/}
    \end{align*}
    which is the desired isomorphism.
    As mentioned in the beginning of the proof the left-hand-side is contractible 
    by lemma \ref{lem:CartQc} and therefore so is the right-hand-side,
    proving the claim.
\end{proof}

From the above main theorem for quasicategories follows immediately.
\begin{proof}[Proof of theorem \ref{thm:qCatAdd}]
    We need to verify the conditions of lemma \ref{lem:realChar}.
    In fact it suffices to check the second equivalence, 
    the first one then follows by applying the same argument to $p^{op}$.
    But this is exactly the combination of the key lemma \ref{lem:key}
    and Quillen's theorem A \ref{thm:qCatQuillenA}.
\end{proof}

\section{A cubical pasting lemma for homotopy Cartesian squares}\label{sec:qPasting}

While the definition of a Cartesian edge \ref{defn:Cart} asks for a certain diagram 
to be homotopy Cartesian, its local counter-part only asks for two specific homotopy fibers
to be equivalent.
We now formalise this notion:
\begin{defn}\label{defn:locCartSqr}
    We say that a commutative diagram of spaces 
    \[
        \begin{tikzcd}
            A \ar[r, "i"] \ar[d, "p"] & B \ar[d, "q"] \\
            C \ar[r, "j"] & D 
        \end{tikzcd}
    \]
    is \emph{homotopy Cartesian at a point} $c \in C$, if the induced map on homotopy fibers
    \[
        \hofib_c(p) \xrightarrow{\ i\ } \hofib_{j(c)}(q)
    \]
    is a weak equivalence.
\end{defn}

Recall that the homotopy fiber $\hofib_y(p)$ of a map $p:X \to Y$ at a point $y \in Y$
is defined as the space of tuples $(x, \gc)$ where $x \in X$ and $\gc$ is a path from $p(x)$ to $y$.
Note that if a square is homotopy Cartesian at $c \in C$ and $c' \in C$ is path-connected to $c$,
then the square is also homotopy Cartesian at $c'$,
since the homotopy type of the homotopy fiber does not change as we move the point.
Moreover, this indeed generalises the notion of homotopy Cartesian squares:
\begin{lem}[{\cite[Proposition 3.3.18]{MV15}}]\label{lem:point-wise-Cartesian}
    A square $(ABCD)$ as above is homotopy Cartesian if and only if it is homotopy Cartesian
    at all points $c \in C$.
\end{lem}


A key consequence of this is the pullback pasting lemma, which allows us to glue and sometimes cancel 
homotopy Cartesian squares:
\begin{lem}[Pullback pasting lemma]\label{lem:pb-pasting}
    For a commutative diagram of spaces
    \[
     \begin{tikzcd} 
        A \ar[r] \ar[d] \ar[rd, phantom, "*"] & B \ar[r] \ar[d] \ar[rd, phantom, "\dagger"] & C \ar[d]\\
        D \ar[r, "f"] & E \ar[r, "g"] & F 
     \end{tikzcd}
    \]
    the following hold:
    \begin{itemize}
     \item If $\dagger$ is homotopy Cartesian, then $*$ is homotopy Cartesian 
     if and only if the rectangle $*\dag$ is homotopy Cartesian.
     \item If $*$ and $*\dag$ are homotopy Cartesian and $f$ is surjective on path-components,
        then $\dag$ is homotopy Cartesian.
    \end{itemize}
\end{lem}
\begin{proof}
    For every $d \in D$ there is a commutative triangle of homotopy fibers:
    \[
    \begin{tikzcd}[column sep = small]
        \hofib_d(A \to D) \ar[rr] \ar[dr] && \hofib_{(g \circ f)(d)}(C \to F) \\
        & \hofib_{f(d)}(B \to E) \ar[ur] & 
    \end{tikzcd}
    \]
    and hence if two of these maps are equivalences, then so is the third.
    We will use this to prove the second point. The first one follows by a similar, but easier, argument.
    
    Assume that $*$ and $*\dag$ are homotopy Cartesian and that $f$ is surjective on path-components.
    To show that $\dag$ is homotopy Cartesian it suffices by lemma \ref{lem:point-wise-Cartesian}
    to show that it is homotopy Cartesian at every point $e \in E$. 
    Since $f$ is surjective on path-components, we can find $d \in D$ such that there is a path 
    from $f(d)$ to $e$. Using the $2$-out-of-$3$ property on the commutative triangle
    at the start of the proof we see that because $*$ is homotopy Cartesian at $d$
    and $*\dag$ is homotopy Cartesian at $d$, $\dag$ has to be homotopy Cartesian at $f(d)$.
    Since there is a path from $f(d)$ to $e$, $\dag$ is homotopy Cartesian at $e$, as claimed.
\end{proof}


We will need the following cubical consequence of pasting lemma:
\begin{cor}\label{cor:cubical-po-law}
   Consider a diagram of cubical shape:
    \[
    \begin{tikzcd}[column sep = small, row sep = small]
    & C \arrow[rr] \arrow[dd] &&  D \arrow[dd] \\
    A \arrow[dd] \arrow[rr, crossing over] \arrow[ru] && B \arrow[ru, "f"'] &\\
    & Y \arrow[rr] & & Z\\
    W \arrow[ru] \arrow[rr] & & X \arrow[ru] \arrow[from=uu, crossing over] &             
    \end{tikzcd}
    \]
    where the top and the bottom square are homotopy Cartesian.
    \begin{itemize}
        \item If the back square is homotopy Cartesian, then so is the front square.
        \item If the front square is homotopy Cartesian and the map $f:B \to D$ is surjective on path-components,
        then the back square is homotopy Cartesian.
    \end{itemize}
\end{cor}
\begin{proof}
    For the first point assume that the squares $(ABCD)$, $(WXYZ)$ and $(CDYZ)$ are homotopy Cartesian.
    Then the pasting lemma implies that the rectangle $(ABYZ)$, which is glued from $(ABCD)$ and $(CDYZ)$,
    is also homotopy Cartesian. Moreover, this rectangle can also be obtained by gluing 
    $(ABWX)$ and $(WXYZ)$. Applying the pasting lemma to this shows that $(ABWX)$ is homotopy Cartesian.
    
    The second part is proven by reversing the argument, with the notable difference being
    that we need the path-component surjectivity of $f$ to see that
    $(ABCD)$ and $(ABYZ)$ being homotopy Cartesian implies that $(CDYZ)$ is homotopy Cartesian.
\end{proof}

Finally, the following lemma allows us to check whether a square is homotopy Cartesian at a point,
by passing to homotopy fibers with respect to some reference map $(E \to B)$.
This will be needed to compare the definition of locally Cartesian edges in weakly unital topological 
categories with the one for semi-Segal spaces.
\begin{lem}\label{lem:local-Cartesian-via-fibers}
    Consider a commutative diagram of spaces where $p$ is a Serre fibration:
    \[
        \begin{tikzcd}
            W \ar[r] \ar[d] \ar[dr, phantom, "*"] & X \ar[r] \ar[d] & E \ar[d, "p"] \\
            Y \ar[r, "f"] & Z \ar[r, "g"] & B
        \end{tikzcd}
    \]
    Fix $y \in Y$, set $b := (g \circ f)(y) \in B$, and write $\wt{y} \in \hofib_b(Y \to B)$
    for $y$ equipped with the constant path.
    Then the left square above $(*)$ is homotopy Cartesian at $y$, 
    if and only if for all lifts $e \in p^{-1}(b)$
    the following square of homotopy fibers is homotopy Cartesian at $\wt{y}$:
    \[
        \begin{tikzcd}
            \hofib_e(W \to E) \ar[r] \ar[d] \ar[dr, phantom, "\dag_e"] & 
            \hofib_e(X \to E) \ar[d] \\
            \hofib_b(Y \to B) \ar[r] & \hofib_b(Z \to B)
        \end{tikzcd}
    \]
\end{lem}
\begin{proof}
    We may assume without loss of generality that the space $Y$ is path-connected:
    If it is not, we simply replace $Y$ by the path-component of $y$ 
    and $W$ by the preimage of this path-component.
    This does not affect any of the hypotheses of the lemma, though it does lead to the simplification 
    that $(*)$ is homotopy Cartesian at $y$ if and only if it is homotopy Cartesian.
    By a similar argument we may also assume that $B$ is path-connected.
    
    We begin by applying corollary \ref{cor:cubical-po-law} to the following cube:
    \[
    \begin{tikzcd}[column sep = small, row sep = small]
    & W \arrow[rr] \arrow[dd] &&  X \arrow[dd] \\
    \coprod_{e \in p^{-1}(b)} \hofib_e(W \to E) \arrow[dd] \arrow[rr, crossing over] \arrow[ru] && 
            \coprod_{e \in p^{-1}(b)} \hofib_e(X \to E)
    \arrow[ru, "f"'] &\\
    & Y \arrow[rr] & & Z\\
    \hofib_b(Y \to B) \arrow[ru] \arrow[rr] & & \hofib_b(Z \to B) \arrow[ru] \arrow[from=uu, crossing over] &             
    \end{tikzcd}
    \]
    The top square is seen to be homotopy Cartesian by applying the pasting lemma
    to the following diagram:
    \[
        \begin{tikzcd}
            \coprod_{e \in p^{-1}(b)} \hofib_e(W \to E) \ar[r] \ar[d] & 
            \coprod_{e \in p^{-1}(b)} \hofib_e(X \to E) \ar[r] \ar[d] & 
            \coprod_{e \in p^{-1}(b)} \{e\} \ar[d] \\
            W \ar[r] & X \ar[r] & E
        \end{tikzcd}
    \]
    and a similar diagram shows that the bottom square is homotopy Cartesian. 
    Note that the map $\coprod_{e \in p^{-1}(b)} \hofib_e(X \to E) \to X$ is surjective 
    on path-components because we assumed that $B$ is path-connected.
    Corollary \ref{cor:cubical-po-law} hence implies that the back of the cube $(*)$
    is homotopy Cartesian if and only if the front of the cube $(\coprod_e \dag_e)$ 
    is homotopy Cartesian.
    This directly implies the ``if" direction of the lemma.
    
    For the ``only if" direction, assume that $(\dag_e)$ is homotopy Cartesian at $\wt{y}$ 
    for all $e \in p^{-1}(b)$.
    It remains to show that $(\coprod_e \dag_e)$ is homotopy Cartesian, 
    or equivalently that $(\dag_e)$ is homotopy Cartesian at all points $y' \in \hofib_b(Y \to B)$.
    For any such $y'$ we can find a loop $\gc \in \Omega_b B$ in $B$
    that is based at $b$, such that $y'$ is in the same path component as $\gc . \wt{y}$,
    i.e.\ as $(y, \gc) \in Y \times_B B^{[0,1]} \times_B \{b\} = \hofib_{b}(Y,B)$.
    This is possible because $Y$ is assumed to be connected and hence $\pi_1(B,b)$
    acts transitively on $\pi_0 \hofib_b(Y \to B)$.
    Let $\wt{\gc}$ be a path in $E$ that starts at $e$ and lifts $\gc$.
    Concatenating with $\wt{\gc}$ and $\gc$, respectively, yields an equivalence of squares:
    \[
    \begin{tikzcd}[column sep = small, row sep = small]
    & \hofib_{e'}(W \to E) \arrow[rr] \arrow[dd] &&  \hofib_{e'}(X \to E) \arrow[dd] \\
    \hofib_e(W \to E) \arrow[dd] \arrow[rr, crossing over] \arrow[ru, "\wt{\gc}_*", "\simeq"'] && 
            \hofib_e(X \to E)
    \arrow[ru, "\wt{\gc}_*", "\simeq"'] &\\
    & \hofib_b(Y \to B) \arrow[rr] & & \hofib_b(Z \to B)\\
    \hofib_b(Y \to B) \arrow[ru, "\gc_*", "\simeq"'] \arrow[rr] & & \hofib_b(Z \to B) \arrow[ru, "\gc_*", "\simeq"'] \arrow[from=uu, crossing over] &             
    \end{tikzcd}
    \]
    Here $e' = \wt{\gc}_*e$ is the end-point of the lifted path $\wt{\gc}$.
    The front square in this diagram is $(\dag_e)$ and the back square is $(\dag_{e'})$.
    Since the diagrams are equivalent, we can conclude that $(\dag_{e'})$ is homotopy Cartesian at 
    $y' = \wt{\gc}_*\wt{y}$ because $(\dag_e)$ is assumed to be homotopy Cartesian at $\wt{y}$.
    This shows that $(\dag_{e'})$ is homotopy Cartesian at $y'$ for all $y' \in \hofib_e(Y \to E)$
    and $e' \in p^{-1}(b)$. 
\end{proof}

\section{Realisation fibrations and semi-Segal spaces}\label{sec:semiSegal}
Before can we prove the additivity theorem for weakly unital categories,
we first need to establish an intermediate version for semi-Segal spaces.
Most importantly, we will introduce a notion of locally Cartesian 
fibrations between semi-Segal spaces and check that it is compatible 
with the notion we have for quasicategories. 

\subsection{Weakly unital semi-Segal spaces}
We now introduce the notion of a semi-Segal space and define weak units following \cite{Har15}.
Unlike Harpaz we will not require Reedy fibrancy throughout,
and instead give definitions that are invariant under level-wise weak equivalences.
(See lemma \ref{lem:weak-unit-invariant} for the compatibility.)
Note that this approach is different to the one taken in \cite{St18} where 
weakly unital semi-Segal spaces are defined in terms of equivalences instead of weak units.
This is not an option for us as the definition of a locally Cartesian edge 
relies on a notion of weak unit.
However, it follows from work of Harpaz that the approaches are equivalent, see fact \ref{fact:Harpaz}.

\begin{defn}
    The semi-simplicial category $\gD^{inj} \subset \gD$ has all objects,
    but only those maps that are injective.
    We write $s\gD^n$ for the semi-simplicial set represented by $[n] \in \gD^{inj}$.
    It is a sub-semi-simplicial set of $\gD^n$ and contains precisely those 
    simplices that are not degenerate.
\end{defn}

\begin{defn}
    A semi-simplicial space $X$ is a \emph{semi-Segal space},
    if the natural map
    \[
        X_n \to X_1 \times_{X_0}^h \dots \times_{X_0}^h X_1
    \]
    is a weak equivalence for all $n \ge 2$.
\end{defn}

\begin{defn}\label{defn:X(A)}
    For a semi-simplicial space $X:(\gD^{inj})^{op} \to \mi{Top}$
    and a semi-simplicial set $A$ we define
    \[
        X(A) = \colim_{s\gD^n \to A} X_n
    \]
    where the colimit is taken over all $n\ge 0$ and all $n$-simplices of $A$, ordered by inclusion.
    This definition is such that $X(s\gD^n) \cong A_n$.
    
    Moreover, if $x \in X_k$ we define $X_n/x$ to be the homotopy fiber, at the point $x$,
    of the map $X_n \to X_k$ induced by $[k]\cong \{n-k, \dots, n\} \subset \{0, \dots, n\}= [n]$.
    Similarly, we let $x\backslash X_n$ denote the homotopy fiber, at the point $x$,
    of the map $X_n \to X_k$ induced by $[k] = \{0, \dots, k\} \subset \{0, \dots, n\}= [n]$.
\end{defn}

\begin{defn}
    A map $f: X \to Y$ of semi-simplicial spaces is a \emph{Reedy-fibration},
    if for any inclusion $A \subset B$ of semi-simplicial sets the 
    induced map
    \[
        X(A) \to X(B) \times_{Y(B)} Y(A).
    \]
    is a Serre fibration.
\end{defn}

    To define weak units, consider for a semi-Segal space $X$ and a vertex $x\in X_0$ 
    the homotopy pullback $X_1 \times_{X_0^2}^h \{(x,x)\} = x \backslash X_1 /x$.
    A point in this space is given by a triple $(e, \gc_0, \gc_1)$ of an edge $e \in X_1$ and
    paths $\gc_i$ in $X_0$ from $x$ to $d_ie$.
    This can be thought of as the space of endomorphisms of $x$ in $X$.
\begin{defn}\label{defn:weak-unit}
    A triple $(u,\gc_0,\gc_1) \in x \backslash X_1 /x$ is called a \emph{weak unit} 
    if for all $a \in X_0$ the following diagrams commute up to homotopy (after cofibrantly replacing each of the spaces):
    \[
    \begin{tikzcd}
        a\backslash X_2/u \ar[r, "d_2"] \ar[d, "d_1"'] & a\backslash X_1/d_1u \ar[d, "*\gc_1"] \\
        a\backslash X_1/d_0u \ar[r, "*\gc_0"] & a\backslash X_1/x
    \end{tikzcd}
    \quad
    \begin{tikzcd}
        u\backslash X_2 /a \ar[r, "d_0"] \ar[d, "d_1"'] & d_0u \backslash X_1 /a \ar[d, "\gc_0*"] \\
        d_1u\backslash X_1 /a \ar[r, "\gc_1*"] &  x\backslash X_1 /a
    \end{tikzcd}
    \]
    Here $*\gc$ and $\gc*$ denote concatenation with a path $\gc$ from the right or left, respectively.
\end{defn}

\begin{defn}
    For a semi-Segal space we let $X^{\rm wu}_x \subset x \backslash X_1 / x$
    denote the subspace of weak units at $x$.
    We say that $X$ is \emph{weakly unital} if $X^{\rm wu}_x$ is non-empty for all $x \in X_0$.
    We say that a semi-simplicial map $f:X \to Y$ between weakly unital semi-Segal spaces 
    is \emph{weakly unital} if it satisfies $f(X^{\rm wu}_x) \subset Y^{\rm wu}_{f(x)}$ for all $x \in X_0$.
%
\end{defn}

In the two diagrams all maps except for $d_1$ are equivalences for all edges $u$,
and the map $d_1$ can be thought of as post- and pre-composition with $u$, respectively. 
Hence the condition that the diagrams commute up to homotopy encodes the idea 
that both post- and pre-composition with $u$ are homotopic to the identity.
Moreover, it follows that $u$ is an equivalence in the sense of \cite[Definition 2.9]{St18}.

\begin{lem}\label{lem:weak-unit-invariant}
    The space of weak units $X^{\rm wu}_x \subset x\backslash X_1 /x$ has the following invariance properties:
    \begin{itemize}
        \item[(i)] For all $x \in X_0$ the subspace $X^{\rm wu}_x \subset x\backslash X_1 /x$ is a union of path-components.
        \item[(ii)] If $x, x' \in X_0$ are in the same path-component, then $X^{\rm wu}_x \simeq X^{\rm wu}_{x'}$.
        \item[(iii)] A level-wise equivalence $f:X \to Y$ induces equivalences $X^{\rm wu}_x \simeq Y^{\rm wu}_{f(x)}$.
        \item[(iv)] If $X$ is a Reedy fibrant semi-Segal space and $Z := \mathrm{Sing}_*(X)$ the semi-simplicial Kan complex obtained by taking level-wise singular complexes, 
        then $Z$ is a semi-Segal space in the sense of \cite{Har15} and for each $x \in X_0$ there is a canonical equivalence 
        $X^{\rm wu}_x \simeq |Z_x^{\rm qu}|$ 
        between the space of weak units at $x$ defined above and Harpaz's space of quasi-units at $x$.
    \end{itemize}
\end{lem}
\begin{proof}
    The property of two maps being homotopic is invariant under weak equivalence in the following sense:
    Let $X, Y, X', Y'$ be cofibrant spaces and let $f, g:X \to Y$ and $f',g':X' \to Y'$ be maps such that there are weak equivalences $w:X \to X'$ and $v:Y \to Y'$ for which $f' \circ w \simeq v \circ f$ and $g' \circ w \simeq v \circ g$.
    Then $f \simeq g$ if and only if $f' \simeq g'$.
    To prove this, pass to the homotopy category.
    Since all spaces involved are cofibrant (and all spaces are fibrant) $v$ and $w$ are isomorphisms in the homotopy category.
    By writing $[f'] = [v] \circ [f] \circ [w]^{-1}$ it follows easily that $[f] = [g]$ if and only if $[f'] = [g']$.
    
    Using this fact, points (i-iii) follow, because each of them amounts to a replacing the square in definition \ref{defn:weak-unit} by a weakly equivalent square.
    
    For (iv) consider a Reedy fibrant semi-Segal space $X$ and let $Z := \mathrm{Sing}_*(X)$ be as above.
    Then $Z$ is Reedy fibrant because the singular complex functor $\mathrm{Sing}_*:\mathrm{Top} \to \mathrm{sSet}$ preserves Kan-fibrations.
    Moreover, $Z$ satisfies the Segal condition because $\mathrm{Sing}_*$ preserves homotopy pullback squares.
    To study the space of weak units in $X$, first note that we may replace the homotopy pullback $x\backslash X_1 /x$ by the genuine fiber of $X_1 \to X_0 \times X_0$ at $(x,x)$ since this map is a Serre fibration for Reedy fibrant $X$.
    It remains to show that an edge $e \in X_1$ with $d_0 e= d_1 e = x$ is a weak unit if and only if the corresponding edge $e \in Z_1$ is a quasi-unit.
    The definition of a weak unit says that the following two pairs of maps are homotopic for all $a$:
    \[\begin{tikzcd}
        a \backslash X_2 / e \ar[r, bend left, "d_2"] \ar[r, bend right, "d_1"'] &
        a \backslash X_1 /x
    \end{tikzcd}
    \quad
    \begin{tikzcd}
        e \backslash X_2 / a \ar[r, bend left, "d_0"] \ar[r, bend right, "d_1"'] &
        x \backslash X_1 /a
    \end{tikzcd}\]
    The definition of a quasi-unit \cite[Definition 1.4.11]{Har15} says that $[e]_*:\Map_X(a,x) \to \Map_X(a,x)$ and $[e]^*:\Map_X(a,x) \to \Map_X(a,x)$ are the identity morphism in the homotopy category.
    Inspecting the definition we see that $\Map_X(a, x) \cong a \backslash X_1 /x$ and that $[e]_* := [d_2] \circ [d_1]^{-1}$.
    Hence $[e]_* = \id_{\Map_X(a,x)}$ holds if and only if $[d_2] = [d_1]$, i.e.\ if $d_2, d_1: a \backslash X_2 / e \to a \backslash X_1/x$ are homotopic.
    The analogous statement holds for $[e]^*$ and it follows that $e$ is a weak unit if and only if it is a quasi-unit.
\end{proof}

Since every semi simplicial space has a weakly equivalent Reedy-fibrant replacement we can use a combination of point (iii) and (iv) in lemma \ref{lem:weak-unit-invariant} to translate Harpaz's results to our setting.
\begin{fact}[Lemma 1.4.16, Lemma 1.5.5, Proposition 1.5.6 of {\cite{Har15}}]\label{fact:Harpaz} 
\ 
\begin{itemize}
    \item[(i)] 
    For every semi-Segal space $X$ and $x \in X_0$ the space $X^{\rm wu}(x)$ is either connected or empty.
    \item[(ii)]
    A semi-Segal space $X$ is weakly unital,
    if and only if $X$ is weakly unital in the sense of \cite[Definition 2.9]{St17}
    \item[(iii)]
    A map of weakly unital semi-Segal spaces $f:X \to Y$ is weakly unital,
    if and only if $f$ is weakly unital in the sense of \cite[Definition 2.9]{St17}.
\end{itemize}
\end{fact}

\subsection{(Locally) co/Cartesian fibrations of semi-Segal spaces}

\begin{defn}\label{defn:sS-Cart}
    An edge $e \in X$ is $p$-Cartesian for $p:X \to Y$
    a map of semi-Segal spaces, if 
    the following square is homotopy Cartesian:
    \[
        \begin{tikzcd}
            X_2/e \ar[r, "d_1"] \ar[d, "p"] & X_1/d_0e \ar[d, "p"] \\
            Y_2/p(e) \ar[r, "d_1"] & Y_1/d_0p(e) 
        \end{tikzcd}
    \]
    We say that $e$ is locally $p$-Cartesian if the square is homotopy Cartesian
    at all $s \in Y_2/p(e)$ for which $d_2s \in Y_1/d_1p(e)$ can be lifted to a weak unit in $Y_{d_1p(e)}^{\rm wu}$.

    Write $X_1^{p \rm -(l)Cart} \subset Y_1$ for the subset of those edges 
    that are (locally) $p$-Cartesian.
    A map $p:X \to Y$ of semi-Segal spaces is a (locally) Cartesian fibration,
    if $(d_1, p):X_1^{p \rm -(l)Cart} \to X_0 \times_{Y_0}^h Y_1$ 
    is surjective on path-components.
    The coCartesian notions are defined dually.
\end{defn}

\begin{rem}\label{rem:weak-units-are-Cartesian}
    Note that while the definition of a locally $p$-Cartesian edges requires us to check that the square 
    is homotopy Cartesian at \emph{all} $s \in Y_2/p(e)$ for which $d_2s \in Y_1/d_1p(e)$ can be lifted to a weak unit in $Y_{d_1p(e)}^{\rm wu}$,
    it in fact suffices to do so at any single $s$.
    This will follow from lemma \ref{lem:point-wise-Cartesian} once we show that any other $s' \in Y_2/p(e)$
    satisfying this property is path-connected to $s$.
    Since $d_2:Y_2/p(e) \to Y_1/d_1p(e)$ is an equivalence by the Segal condition,
    it suffices to show that $d_2s$ and $d_2s'$ are in the same path-component of $Y_1/d_1p(e)$.
    By assumption both $d_2s$ and $d_2s'$ are in the image of the projection 
    $Y_{d_1p(e)}^{\rm wu} \subset d_1p(e) \backslash Y_1 / d_1p(e) \to Y_1 / d_1p(e)$.
    The space $Y_{d_1p(e)}^{\rm wu}$ is path-connected by fact \ref{fact:Harpaz}(i), so the claim follows.
\end{rem}

\begin{ex}\label{ex:weak-units-are-Cartesian}
    Consider a weakly unital map of weakly unital semi-Segal spaces $p:X \to Y$.
    Then for every weak unit $(e, \gc_0, \gc_1) \in x\backslash X_1/x$
    the underlying edge $e \in X_1$ is $p$-Cartesian.
    Indeed, the definition of a weak unit implies that $d_1:X_2/e \to X_1/d_0e$ is an equivalence,
    and since $P$ is assumed to preserve weak units  
    $d_1:Y_2/p(e) \to Y_1/d_0p(e)$ also is an equivalence.
    Therefore both horizontal maps in the square of definition \ref{defn:Cart} are equivalences,
    making it homotopy Cartesian.
    This shows that $e$ is Cartesian and hence the subspace $X_1^{p-\rm Cart} \subset X_1$
    contains the weak units.
    Because every object in $X$ is assumed to admit a weak unit and weak units are unique up to homotopy, 
    this implies that the map $(d_1,p):X_1^{p-\rm Cart} \to X_0 \times_{Y_0}^h Y_1$
    hits all components where the second coordinate is a weak unit.
\end{ex}

We will later need to use that, just like in the case of quasicategories, the restriction
of a locally Cartesian fibration to any edge is a Cartesian fibration.
\begin{lem}\label{lem:pullback-of-Cartesian}
    Let $P: X \to Y$ be a weakly unital functor between weakly unital semi-Segal spaces. 
    Assume further that $P$ is locally Cartesian and a Reedy fibration.
    Then for every weakly unital functor $\gs:\gD^1 \to Y$
    the pullback $P_{|\gs}:X \times_Y \gD^1 \to \gD^1$ is a Cartesian fibration.
\end{lem}
\begin{proof}
%
    There are three edges in $\gD^1$. The two degenerate ones are weak units and 
    hence by example \ref{ex:weak-units-are-Cartesian} they always admit Cartesian lifts.
    Now consider the non-degenerate edge $f \in (\gD^1)_1$ corresponding to $\id_{[1]}: [1] \to [1]$.
    Given a lift of the source $(x, i) \in (X \times_Y \gD^1)_0$ we need to find 
    a Cartesian lift of $f$. For $(x, i)$ to be a lift of $d_1 f = 0$ we need that $i=0$ and $P(x) = \gs(0)$. 
    Since $P$ is assumed to be locally Cartesian, we can find a locally $P$-cartesian edge $e \in X_1$
    such that $d_1 e = x$ and $P(e) = \gs(f)$.
    Hence $(e, f)$ is the desired lift of $f$, if we can show that it is $P_{|\gs}$-Cartesian.
    
    Consider the square from definition \ref{defn:Cart} applied to the edge $(e,f)$:
    \[
        \begin{tikzcd}
            (X \times_Y \gD^1)_2/(e,f) \ar[r, "d_1"] \ar[d, "p"] & 
            (X \times_Y \gD^1)_1/(d_0e, 1) \ar[d, "p"] \\
            (\gD^1)_2/f \ar[r, "d_1"] & (\gD^1)_1/d_0f
        \end{tikzcd}
    \]
    The space $(\gD^1)_2/f$ has a single point, namely the $2$-simplex $\rho:[2] \to [1]$ 
    that sends $0,1 \mapsto 0$ and $2 \mapsto 1$.
    The space $(\gD^1)_1/d_0f$ has two points: the edge $f$ and the degenerate edge at $d_0f =1$.
    Therefore, this square is a pullback if and only if $(X \times_Y \gD^1)_2/(e,f)$
    is equivalent to the part of $(X \times_Y \gD^1)_1/(d_0e, d_0f)$ that lies over 
    $f \in (\gD^1)_1/d_0f$.
    We may rewrite $(X \times_Y \gD^1)_2/(e,f) \cong X_2/e \times_{Y_2/\gs(f)} (\gD^1)_2/f$
    and $(X \times_Y \gD^1)_1/(d_0e, d_0f) \cong X_1/d_0e \times_{Y_1/d_0\gs(f)} (\gD^1)_1/d_0f$.
    Hence it suffices to show that in the following square the homotopy fibers of the vertical maps
    at $\gs(\rho) \in Y_1/d_0\gs(f)$ and $\gs(f) \in Y_1/d_0\gs(f)$ agree:
    \[
        \begin{tikzcd}
            X_2/e \ar[r, "d_1"] \ar[d, "p"] & 
            X_1/d_0e \ar[d, "p"] \\
            Y_2/\gs(f) \ar[r, "d_1"] & 
            Y_1/d_0\gs(f)
        \end{tikzcd}
    \]
    Since $d_2\gs(\rho) = \gs(d_2 \rho)$ is a weak unit this follows from the assumption 
    that $e$ is a locally $P$-cartesian edge.
\end{proof}

\subsection{The additivity theorem for semi-Segal spaces}
To transfer the result about quasicategorical realisation fibrations 
to the setting of semi-Segal spaces we use the following construction:
\begin{defn}
    For any semi-simplicial space $X$ we will let $X^\gd$ denote the semi-simplicial set
    obtained by forgetting the topology of $X$.
\end{defn}

\begin{lem}[{\cite[3.11(i+iv)]{St18}}]\label{lem:simplicial-str}
    If $P: X \to Y$ is a weakly unital Reedy-fibration 
    of weakly unital Reedy-fibrant semi-Segal spaces,
    then $X^\gd$ and $Y^\gd$ admit simplicial structures 
    such that they are quasicategories 
    and $P^\gd$ is an inner fibration of simplicial sets.
\end{lem}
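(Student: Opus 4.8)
The plan is to split the statement into two pieces. First I would check that, already on the underlying \emph{sets}, $\mcC^\gd$ and $\mcD^\gd$ are ``inner Kan'' — every inner horn $s\gL^n_k\to\mcC^\gd$ with $0<k<n$ bounds a simplex — and that $P^\gd$ has the right lifting property against every inclusion $s\gL^n_k\subset s\gD^n$. Then I would equip these semi-simplicial sets with degeneracy operators making them genuine simplicial sets, compatibly with $P$. Granting both, the conclusion follows at once: a simplicial set whose underlying semi-simplicial set is inner Kan is automatically a quasicategory, since any simplicial inner horn restricts to a semi-simplicial one, is filled there, and the filler extends over the leftover degenerate simplices of $\gD^n$ in the unique way dictated by the chosen degeneracies; the same bookkeeping upgrades the lifting property of $P^\gd$ to the assertion that it is an inner fibration.

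The first piece is formal, using only Reedy fibrancy and the semi-Segal condition. As $s\gL^n_k$ contains the spine $\bigcup_i s\gD^{\{i,i+1\}}$ for $0<k<n$, both maps $\mcC_n=\mcC(s\gD^n)\to\mcC(s\gL^n_k)$ and $\mcC(s\gL^n_k)\to\mcC(\mathrm{spine})\cong\mcC_1\times^h_{\mcC_0}\cdots\times^h_{\mcC_0}\mcC_1$ are weak equivalences — the former by the semi-Segal property, the latter by the standard analysis of the spine inclusions into inner horns — and both are fibrations by Reedy fibrancy, hence trivial Kan fibrations, in particular surjective on points. Replaying this relative to $\mcD$ — here using that $P$ is a Reedy fibration, so that the pullback--corner map $\mcC_n\to\mcD_n\times_{\mcD(s\gL^n_k)}\mcC(s\gL^n_k)$ is a fibration, and a weak equivalence by base change plus two-out-of-three — shows that map is a trivial Kan fibration as well, which is exactly the required lifting property.

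The second piece is where the weak unitality hypothesis enters and is the crux. Weak unitality of $\mcD$ gives, for each vertex $a$, an equivalence with $a$ as an endpoint, from which one extracts a genuine unit $u_a\in\mcD_1$ with $d_0u_a=d_1u_a=a$; setting $s_0(a)=u_a$ is the first layer. The higher degeneracies $s_i\colon\mcD_n\to\mcD_{n+1}$ must then be produced so that all simplicial identities hold, which I would arrange by an inductive construction of a unital structure on $\mcD^\gd$: at each stage the admissible choices form the space of sections of an acyclic fibration built from the already-chosen units, and this space is non-empty — indeed contractible — precisely because those units are equivalences and $\mcD$ is semi-Segal. (This is the semi-Segal, set-level shadow of the fact that a quasi-unital Segal space carries an essentially unique unital structure.) Having fixed such a structure on $\mcD$, I would transport it along $P$ to $\mcC$: since $P$ is a Reedy fibration, and weakly unital so that units lift to units, the corresponding relative choices are again sections of acyclic fibrations, so $\mcC^\gd$ acquires degeneracies making $P^\gd$ strictly simplicial. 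Discreteness is never used, so one may run this topologically and forget the topology at the end — or simply quote \cite[Lemma 3.11]{St18}.

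I expect the main obstacle to be precisely this last construction, and inside it the demand that the degeneracies on $\mcC^\gd$ and $\mcD^\gd$ be coherent \emph{over $P$}. Even on a single weakly unital semi-Segal space the choice is delicate — one cannot just fill the horn $s\gL^2_1=(u_a,e)$ and expect the opposite edge to be $e$ on the nose — so the argument has to pass through contractibility of spaces of fillers rather than bare horn-filling; carrying the choices out simultaneously on source and target is exactly where the Reedy fibration hypothesis on $P$ is needed.
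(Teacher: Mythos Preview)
The paper does not give its own proof of this lemma: it is quoted from \cite[Lemma~3.11(i+iv)]{St18} and used as a black box, with no argument supplied. There is therefore nothing in the present paper to compare your proposal against.

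That said, your outline is essentially the strategy of \cite{St18}. The first piece --- inner-horn filling for $\mcC^\gd$, $\mcD^\gd$ and the relative version for $P^\gd$, deduced from Reedy fibrancy together with the Segal condition via the spine inclusion --- is exactly how the inner-fibration part is handled there. The second piece --- manufacturing degeneracies from weak units, inductively and compatibly with $P$, by repeatedly choosing sections of acyclic fibrations --- is likewise the mechanism used in \cite{St18}, and your diagnosis that coherence \emph{over $P$} is the delicate point is accurate; this is precisely why the Reedy-fibration hypothesis on $P$ (and weak unitality of $P$, so that units lift to units) is needed.

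One small remark on your bridge between the two pieces: the assertion that a simplicial set whose underlying semi-simplicial set fills inner horns is a quasicategory is correct, but the clean reason is adjunction --- $\gD^n$ and $\gL^n_k$ are the free simplicial sets on $s\gD^n$ and $s\gL^n_k$, so simplicial lifting problems against $\gL^n_k\subset\gD^n$ are literally the same data as semi-simplicial lifting problems against $s\gL^n_k\subset s\gD^n$. Your phrasing in terms of ``extending over the leftover degenerate simplices'' amounts to this, but stating it via the adjunction makes the relative version for $P^\gd$ immediate as well.
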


We now study how locally (co)Cartesian fibrations behave under the functor $(\blank)^\gd$.

\begin{lem}[{\cite[Lemma 3.6]{St17}}]\label{lem:Cart-gdCart}
    Let $P: X \to Y$ be a weakly unital Reedy-fibration of weakly unital Reedy-fibrant semi-Segal spaces.
    If $e \in X_1$ is $P$-Cartesian in the sense of semi-Segal spaces (\ref{defn:sS-Cart}), 
    then $e\in X_1^\gd$ is $P^\gd$-Cartesian in the sense of quasicategories (\ref{defn:qCart}).
\end{lem}

\begin{rem}
    In the proof of \cite[Lemma 3.6]{St17} the second diagram is incorrect because
    the condition (iii) does not imply that the bottom map of this diagram is an equivalence.
    However, the underlying proof idea is still correct, and hence the proof is easily fixed as follows:
    
    The missing part of the of the proof is to show that the front square of the following cube is homotopy Cartesian. 
    This is what is referred to as ``the total square in (9)" there.
    \[
        \begin{tikzcd}[column sep = small, row sep = small]
            & X_2/e \ar[rr, "d_1"] \ar[dd, "P", near end] 
            & & X_1/x \ar[dd, "P"] \\
            X_{n+1}/e \ar[rr, crossing over, "d_n", near end] \ar[dd, "P"] \ar[ru] 
            & & X_n/x \ar[ru] & \\
            & Y_2/P(e) \ar[rr, "d_1", near start] 
            & & Y_1/P(x) &\\
            Y_{n+1}/P(e) \ar[rr, "d_n"] \ar[ru] 
            & & Y_n/P(x) \ar[from=uu, crossing over, "P", near start] \ar[ru] & 
        \end{tikzcd}
    \]
    The back square of this cube is homotopy Cartesian by the assumption that $e$ is $P$-Cartesian.
    The top square fits into a diagram as follows:
    \[
        \begin{tikzcd}
            X_{n+1}/e \ar[r, "d_n"] \ar[d] 
            & X_n/x \ar[d] \ar[r, "d_n"] 
            & X_{n-1} \ar[d] \\
            X_2/e \ar[r, "d_1"] 
            & X_1/x \ar[r, "d_1"] & X_0
        \end{tikzcd}
    \]
    Here the right-hand square and the outside rectangle are both homotopy Cartesian by the Segal condition,
    and hence the pullback pasting lemma \ref{lem:pb-pasting} implies that the left square 
    is homotopy Cartesian. This shows that the top square of the cube is homotopy Cartesian
    and a similar argument shows the same for the bottom square.
    Now the cubical pasting lemma \ref{cor:cubical-po-law} implies that the front square
    of the cube is homotopy Cartesian, which is what we claimed.
    The rest of the proof proceeds as in \cite{St17}.
\end{rem}

We now recover \cite[3.11(iii)]{St18} plus a version for locally (co)Cartesian fibrations:
\begin{cor}\label{cor:Cart-gdCart}
    Let $P: X \to Y$ be 
    a weakly unital Reedy fibration of weakly unital Reedy-fibrant semi-Segal spaces.
    Equip $X^\gd$ and $Y^\gd$ with the compatible simplicial structures from lemma \ref{lem:simplicial-str}.
    \begin{itemize}
        \item If $P$ is (co)Cartesian, then $P^\gd:X^\gd \to Y^\gd$
            is a (co)Cartesian fibration of quasicategories.
        \item If $P$ is locally (co)Cartesian, then $P^\gd:X^\gd \to Y^\gd$ 
            is a locally (co)Cartesian fibration of quasicategories.
    \end{itemize}
\end{cor}
\begin{proof}
    The first point follows directly from lemma \ref{lem:Cart-gdCart} as in \cite{St18}.
    We will prove the second point using the first.
    
    Assume that $P$ is locally Cartesian; the locally coCartesian case follows by taking opposites.
    By lemma \ref{lem:simplicial-str} the simplicial structures can be chosen such that
    $P^\gd:X^\gd \to Y^\gd$ is a simplicial map and an inner fibration.
    To see that $P^\gd$ is a locally Cartesian fibration it will suffice to check
    that for each simplicial map $\gs:\gD^1 \to Y^\gd$ the pullback 
    $P^\gd_{|\gs}: X^\gd \times_{Y^\gd} \gD^1 \to \gD^1$ is a Cartesian fibration.
    (See remark \ref{rem:locally-Cartesian}.)
    Since $\gD^1$ is level-wise discrete simplicial space $\gs$ defines a continuous map
    of semi-simplicial spaces $\gs:\gD^1 \to Y$.
    Both sides are weakly unital and $\gs$ preserves these weak units:
    The only weak units in $\gD^1$ are the two degenerate edges,
    which are send to degenerate edges in $Y$ because $\gs:\gD^1 \to Y^\gd$ is a \emph{simplicial} map.
    Every degenerate edge in $Y^\gd$ is a weak unit in $Y$ because of how the simplicial structure
    is constructed in lemma \cite[Lemma 3.11(iv)]{St17}.
    Hence $\gs: \gD^1 \to Y$ is indeed a weakly unital map of weakly unital semi-Segal spaces.
    By lemma \ref{lem:pullback-of-Cartesian} the pullback of semi-Segal spaces 
    $P_{|\gs}: X \times_Y \gD^1 \to \gD^1$ is a Cartesian fibration.
    Moreover $P_{|\gs}$ is a weakly unital Reedy fibration because $P$ was.
    Therefore, we may apply the first point of this corollary to $P_{|\gs}$ 
    and conclude that $(P_{|\gs})^\gd : (X \times_Y \gD^1)^\gd \to \gD^1$ 
    is a Cartesian fibration of simplicial sets.
    The functor $\gd$ commutes with pullbacks, so
    $(P_{|\gs})^\gd = (P^\gd)_{|\gs}: X^\gd \times_{Y^\gd} \gD^1 \to \gD^1$
    is a Cartesian fibrations of simplicial sets.
    Since we have shown this for all $\gs:\gD^1 \to Y^\gd$ it follows that $P^\gd$ is 
    a locally Cartesian fibration.
%
%
\end{proof}

We can now deduce the additivity theorem for semi-Segal spaces
from the version for quasicategories:
\begin{thm}[Main Theorem for semi-Segal spaces]\label{thm:sS-localAdd}
    Let 
    \[
        \mcB' \xrightarrow{ F } \mcB \xleftarrow{ P} \mcE
    \]
    be a diagram of weakly unital semi-Segal spaces and weakly unital maps.
    If $P$ is a level-wise fibration, a \emph{locally} Cartesian 
    and a \emph{locally} coCartesian fibration,
    then the (level-wise) pullback diagram
    \[
        \begin{tikzcd}
            \mcB' \times_\mcB \mcE \ar[r] \ar[d] & \mcE \ar[d, "P"] \\
            \mcB' \ar[r, "F"] & \mcB
        \end{tikzcd}
    \]
    realises to a homotopy pullback diagram of spaces.
\end{thm}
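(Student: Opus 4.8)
The plan is to reduce Theorem~\ref{thm:sS-localAdd} to its quasicategorical counterpart, Theorem~\ref{thm:qCatAdd}, by passing through the underlying simplicial sets via the functor $(\blank)^\gd$. First I would invoke Lemma~\ref{lem:simplicial-str} to equip $\mcB^\gd$, $\mcE^\gd$ (and $(\mcB')^\gd$, if $F$ is also a Reedy-fibration — otherwise replace $F$ by a Reedy-fibration first, which does not change the realisation of the pullback) with simplicial structures making them quasicategories and $P^\gd:\mcE^\gd \to \mcB^\gd$ an inner fibration. Then Corollary~\ref{cor:Cart-gdCart} tells us that, since $P$ is a locally Cartesian and locally coCartesian fibration of semi-Segal spaces, $P^\gd$ is a locally Cartesian and locally coCartesian fibration of quasicategories. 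Hence Theorem~\ref{thm:qCatAdd} applies and shows that $P^\gd$ is a realisation fibration in the quasicategorical sense.

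Next I would translate this back to semi-Segal spaces. The key point is that geometric realisation of a semi-Segal space $X$ agrees, up to weak equivalence, with the geometric realisation of its underlying simplicial set $X^\gd$ — more precisely, $|X| \simeq |X^\gd|$ when $X$ is Reedy-fibrant, because the Bousfield–Kan map comparing the homotopy colimit of the level spaces with the realisation of the discretised object is an equivalence under Reedy-fibrancy (this is the standard ``fat realisation vs.\ thin realisation'' comparison, which holds here since everything is Reedy-cofibrant after discretising). Applying this to $\mcB$, $\mcE$, $\mcB'$ and to the level-wise pullback $\mcB' \times_\mcB \mcE$, we get that the square of realisations of semi-Segal spaces is weakly equivalent to the square of realisations of the underlying simplicial sets. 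One must check that $(\mcB' \times_\mcB \mcE)^\gd \cong (\mcB')^\gd \times_{\mcB^\gd} \mcE^\gd$, which is immediate since $(\blank)^\gd$ preserves limits, and that this level-wise pullback is the pullback computing the realisation fibration condition for $P^\gd$. Since $p^\gd$ is a realisation fibration of simplicial sets, that square is a homotopy pullback of spaces, and transporting back along the equivalences $|X| \simeq |X^\gd|$ yields the desired homotopy pullback square.

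The main obstacle I expect is the comparison $|X| \simeq |X^\gd|$ and its naturality: one needs that forgetting the topology and then realising (a homotopy colimit over $\gD^{inj,op}$ of discrete sets, i.e.\ the thin realisation of the semi-simplicial set) computes the same homotopy type as the fat/thick realisation of the semi-simplicial space, and that this identification is compatible with the four corners of the pullback square simultaneously. This requires the Reedy-fibrancy (equivalently, after discretising, Reedy-cofibrancy of the level sets) to ensure no correction terms appear, and care that the pullback is formed level-wise so that Reedy-fibrancy of $P$ is inherited by $\mcB' \times_\mcB \mcE \to \mcB'$. A secondary, more bookkeeping-type issue is handling the case where $F$ is not itself a Reedy-fibration: there one first factors $F$ as a Reedy-trivial-cofibration followed by a Reedy-fibration, notes that replacing $F$ this way changes neither $|\mcB'|$ nor the realisation of the (now homotopy) pullback, and then runs the argument. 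Once these comparisons are in place, the theorem follows formally from Theorem~\ref{thm:qCatAdd} and Corollary~\ref{cor:Cart-gdCart}.
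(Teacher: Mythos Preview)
Your proposal is correct and follows essentially the same route as the paper: replace by Reedy-fibrant objects and Reedy-fibrations, apply $(\blank)^\gd$, invoke Lemma~\ref{lem:simplicial-str} and Corollary~\ref{cor:Cart-gdCart} to land in the hypotheses of Theorem~\ref{thm:qCatAdd}, and then transport the homotopy-Cartesian conclusion back along the natural comparison $|X^\gd|\simeq |X|$. The only bookkeeping point to tighten is that Lemma~\ref{lem:simplicial-str} requires $P$ to be a Reedy-fibration and $\mcB$ to be Reedy-fibrant, not merely $P$ a level-wise fibration, so you must replace $\mcB$ and $P$ (not just $F$) up front and note that the hypotheses ``weakly unital'' and ``locally (co)Cartesian'' are invariant under level-wise equivalence so they survive this replacement.
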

\begin{proof}[Proof of \ref{thm:sS-localAdd} from \ref{thm:qCatAdd}]
    The notion of weakly unital functor and locally (co)Cartesian fibration 
    are invariant under level-wise equivalences:
        For weakly unital functors this is shown in \cite[Lemma 3.9]{St18}.
        The case for locally (co)Cartesian fibrations is very similar,
        as lemma \ref{lem:weak-unit-invariant} shows that the notion of weak units
        is invariant under level-wise equivalences.
    Therefore the hypothesis of the theorem are invariant under weak equivalences and using \cite[Lemma 3.8]{St18} we may replace $\mcB$ by a Reedy-fibrant semi-Segal space
    and $F$ and $P$ by Reedy-fibrations.

    Applying the (pullback preserving) functor $(\blank)^\gd$ to the diagram we obtain a square
    \[
        \begin{tikzcd}
            (\mcB')^\gd \times_{\mcB^\gd} \mcE^\gd \ar[r] \ar[d] & \mcE^\gd \ar[d, "P^\gd"] \\
            (\mcB')^\gd \ar[r, "F^\gd"] & \mcB^\gd.
        \end{tikzcd}
    \]
    Using lemma \ref{lem:simplicial-str} we can choose simplicial structures 
    such that this is a diagram of quasicategories in which all maps are inner fibrations.
    In corollary \ref{cor:Cart-gdCart} we showed that $P^\gd: \mcE^\gd \to \mcB^\gd$ 
    is locally Cartesian and locally coCartesian, 
    so the quasicategory version of the additivity theorem (theorem \ref{thm:qCatAdd})
    applies and shows that the square realizes to a homotopy Cartesian square of spaces.

    Now the natural transformation $(\blank)^\gd \Rightarrow \Id$ 
    induces a map from the above square to the square in the statement of the proposition.
    By \cite[Lemma 3.11]{St18} this map becomes a weak equivalence in every entry 
    after geometric realisation, and so it follows that the square in the statement
    of the theorem realizes to a homotopy Cartesian square as claimed.
\end{proof}

\section{Realisation fibrations and weakly unital topological categories}\label{sec:wutCat}

In this section we prove the main theorem for realisation fibrations 
between weakly unital topological categories, which is stated below as \ref{thm:wu-locadd}.
We recall the necessary definitions, but refer the reader to \cite{ERW19} and \cite{St18}
for a more detailed introduction to non-unital topological categories.

\subsection{Weakly unital topological categories}
\begin{defn}
    A \emph{non-unital topological category} $\mcC$ is a 
    space of objects $\mcC_0$ and a space of morphisms $\mcC_1$
    together with source and target maps $s,t:\mcC_1 \to \mcC_0$ 
    and a composition map $m:\mcC_1 \times_{\mcC_0} \mcC_1 \to \mcC_1$
    such that for every composable triple 
    $(f,g,h) \in \mcC_1\times_{\mcC_0} \mcC_1 \times_{\mcC_0} \mcC_1$ we have
    \begin{align*}
        s(m(f,g)) &= s(g) & t(m(f,g)) &= t(g) & m(f,m(g,h)) &= m(m(f,g),h).
    \end{align*}
    A \emph{non-unital functor} $F:\mcC \to \mcD$ is a pair of continuous maps 
    $F_0:\mcC_0 \to \mcD_0$ and $F_1:\mcC_1 \to \mcD_1$ commuting with $s$, $t$, and $m$.
\end{defn}

\begin{defn}
    The \emph{classifying space} $B\mcC$ of $\mcC$ is the (fat) geometric realisation 
    of its semi-simplicial \emph{nerve} $N\mcC$.
\end{defn}

\begin{defn}
    For two objects $x,y \in \mcC_0$ we define the $\hom$-space as the pullback
    \[
        \mcC(x,y) := \{x\} \times_{\mcC_0} \mcC_1 \times_{\mcC_0} \{y\}
    \]
    and we write $f:x \to y$ for $f \in \mcC(x, y)$.
\end{defn}

%

\begin{defn}\label{defn:wut-cat}
    We say that an endomorphism $u \in \mcC(x,x)$ is a \emph{weak unit} 
    if for all objects $w, z \in \mcC_0$ both pre- and post-composition with $u$
    \[
        f_*: \mcC(w,x) \to \mcC(w,x) \qand f^*: \mcC(x,z) \to \mcC(x,z)
    \]
    are homotopic to the identity.
    
    A non-unital topological category $\mcC$ is \emph{weakly unital} 
    if for every object $x \in \mcC$ there is a weak unit $u \in \mcC(x, x)$.
    A \emph{weakly unital functor} is a non-unital functor that sends weak units to weak units.
\end{defn}

\begin{rem}
    In Steimle's paper \cite{St18} weakly unital topological categories are not defined 
    in terms of weak units but rather in terms of equivalences. 
    A morphism $f:x \to y$ is an equivalence if and only if pre- and post-composition with it
    induce equivalence $f_*:\mcC(w,x) \simeq \mcC(w,y)$ and $f^*:\mcC(y,z) \simeq \mcC(x,z)$.
    A weakly unital topological category in his sense is a non-unital topological category
    where every object is the target of an equivalence.
    Since weak units are in particular equivalences, it follows that every weakly unital 
    topological category in the sense of definition \ref{defn:wut-cat} is also weakly unital
    in Steimle's sense.
    
    The converse also holds: if an object $y$ is the target of some equivalence $f:x \to y$,
    then one can use the fact that $f^*:\mcC(y,y) \to \mcC(x,y)$ is surjective on path-components
    to find $u:y \to y$ with $f \circ u \simeq f$. From this it follows that $u$ itself
    is an equivalence and further that $u \circ u \simeq u$. So $u_*: \mcC(w, y) \to \mcC(w, y)$
    is a homotopy automorphism satisfying $u_* \circ u_* \simeq u_*$ 
    -- it has to be homotopic to the identity.
    For the same reason $u^*$ is homotopic to the identity and so $u$ is a weak unit. 
    Therefore the notion of a weakly unital topological category in definition
    \ref{defn:wut-cat} agrees with the one from \cite{St18}.
    A similar argument shows that the notions of weakly unital functor agree.
\end{rem}

\begin{defn}
    A functor $F:\mcC \to \mcD$ of non-unital topological categories is a \emph{local fibration}
    if the continuous maps $F_0:\mcC_0 \to \mcD_0$ and
    $((s,t),F_1):\mcC_1 \to (\mcC_0)^2 \times_{(\mcD_0)^2} \mcD_1$ are Serre fibrations.

    A non-unital topological category $\mcC$ is locally fibrant if the functor $\mcC \to *$
    is a local fibration. 
    Explicitly, this means that the ``source-target map'' 
    $(s,t):\mcC_1 \to (\mcC_0)^2$ is a Serre fibration.
\end{defn}

\begin{rem}
    If a non-unital topological category $\mcC$ is fibrant, then its nerve $N\mcC$ is a semi-Segal space:
    by definition of the nerve we have $N_k\mcC \cong N_1\mcC \times_{N_0\mcC} \dots \times_{N_0\mcC} N_1\mcC$,
    and for $\mcC$ fibrant the relevant maps in these pullbacks are fibrations, making them homotopy pullbacks.
    Moreover, in this case the construction $N_n\mcC/\gs$ from definition \ref{defn:X(A)} 
    is equivalent to the (strict) fiber of $N_n\mcC \to N_k\mcC$ at $\gs \in N_k\mcC$.
    This in turn is homeomorphic to the fiber of $N_{n-k}\mcC \to N_0\mcC$ over $x$,
    where $x = \gs(0)$ is the initial vertex of $\gs$.
    
    Note that in this case the above definition of a weak unit in $\mcC$ coincides 
    with the definition of a weak unit in $N\mcC$ as given in \ref{defn:weak-unit}.
\end{rem}


\begin{defn}\label{defn:Cart}
    Let $P:\mcE \to \mcB$ be a weakly unital functor: 
    \begin{itemize}
        \item A morphism $f:e \to e'$ in $\mcE$ is \emph{$P$-Cartesian}
            if 
            the following diagram is homotopy Cartesian for all $t \in \mcE_0$:
            \[
                \begin{tikzcd}[column sep = large]
                    \mcE(t, e) \ar[r, "f \circ \blank"] \ar[d, "P_{(t,e)}"] 
                    & \mcE(t, e') \ar[d, "P_{(t,e')}"] \\
                    \mcB(P(t), P(e))\ar[r, "P(f) \circ \blank"] 
                    & \mcB(P(t), P(e')) 
                \end{tikzcd}
            \]
        \item A morphism $f:e \to e'$ is \emph{locally $P$-Cartesian}, 
            if for all $t \in \mcE$ with $P(t) = P(e)$ and all weak units $u:P(t) \to P(e)$,
            the above square is homotopy Cartesian at $u$.
            Spelling out definition \ref{defn:locCartSqr}
            this means that the induced map on homotopy fibers 
            \[
                (f\circ\blank): 
                \hofib_{u}\left(\mcE(t,e) \xrightarrow{P_{(t,e)}} \mcB(P(t),P(e)) \right)
                \xrightarrow{\ \simeq\ } 
                \hofib_{P(f) \circ u}\left(\mcE(t,e') \xrightarrow{P_{(t,e')}} \mcB(P(t), P(e'))\right)
            \] 
            is a weak equivalence.
        \item We say that $P$ is \emph{(locally) Cartesian} if for $e' \in \mcE$, $b \in \mcB$,
            and $g:b \to P(e')$ there is a (locally) $P$-Cartesian $f:e \to e'$ such that 
            $P(e) = b$ and $P(f) = g$.
        \item We say that $P$ is \emph{(locally) coCartesian} if $P^{op}:\mcE^{op} \to \mcB^{op}$ 
            is (locally) Cartesian.
    \end{itemize}
\end{defn}

\begin{rem}
    Note that any two weak units $u,u'\in \mcB(P(e), P(e))$ must lie in the same path-component
    because we have paths $u \simeq u'^*(u) = u \circ u' = u_*(u') \simeq u'$ 
    coming from the homotopies $\id \simeq u'^*$ and $\id \simeq u_*$. 
    Hence, when checking whether a morphism $f:e \to e'$
    is locally $P$-Cartesian it suffices to do so at a single weak unit $u:P(t) = P(e) \to P(e)$.
\end{rem}

We now have all the definitions at hand to state Theorem \ref{thm:main}.
\begin{thm}[Main Theorem for weakly unital topological categories]\label{thm:wu-locadd}
    Let $P: \mcE \to \mcB$ be a weakly unital functor of weakly unital topological categories
    such that $\mcB$ is fibrant, $P$ is a local fibration, $P$ is \emph{locally} Cartesian,
    and $P$ is \emph{locally} coCartesian. Then $P$ is a realisation fibration.

    Hence, for every weakly unital functor $F:\mcC \to \mcB$,
    the following square is homotopy Cartesian:
    \[
        \begin{tikzcd}
            B(\mcC \times_{\mcB} \mcE) \ar[r] \ar[d] & B\mcE \ar[d, "P"] \\
            B\mcC \ar[r, "F"] & B\mcB .
        \end{tikzcd}
    \]
\end{thm}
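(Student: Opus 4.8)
The plan is to deduce Theorem~\ref{thm:wu-locadd} from its semi-Segal counterpart, Theorem~\ref{thm:sS-localAdd}, by passing to semi-simplicial nerves. The first step is to set up the translation. For a locally fibrant weakly unital topological category $\mcC$ the maps $s,t:\mcC_1\to\mcC_0$ are Serre fibrations (being composites of the fibration $(s,t):\mcC_1\to\mcC_0^2$ with the product projections), so $N\mcC_n=\mcC_1\times_{\mcC_0}\cdots\times_{\mcC_0}\mcC_1$ is simultaneously the strict and the homotopy iterated pullback; hence $N\mcC$ is a semi-Segal space, and the weak units of $\mcC$ provide points of $N\mcC_1^\simeq$ surjecting onto $N\mcC_0$, so that $N\mcC$ is weakly unital. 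A weakly unital functor sends weak units to weak units and hence induces a weakly unital map of semi-Segal spaces, and $N$ preserves the relevant fibre products so that, level-wise, $N(\mcC\times_\mcB\mcE)\cong N\mcC\times_{N\mcB}N\mcE$. Since $B(-)=\|N(-)\|$ by definition, it remains to check that $N$ carries the hypotheses of Theorem~\ref{thm:wu-locadd} to those of Theorem~\ref{thm:sS-localAdd}.

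Second, I would translate the three conditions on $P$. That $P$ is a local fibration gives, by building $N\mcE_n$ up as an iterated fibre product over $\mcE_0$ and using that both $P_0$ and $((s,t),P_1)$ are Serre fibrations, that $NP_n:N\mcE_n\to N\mcB_n$ is a Serre fibration for all $n$, i.e.\ $NP$ is a level-wise fibration; the fibrancy of $\mcB$ enters here to make $N\mcB$ a well-behaved semi-Segal space, after the level-wise-equivalence replacement allowed in the proof of Theorem~\ref{thm:sS-localAdd}. For the (co)Cartesian conditions I would compare Definition~\ref{defn:Cart} with Definition~\ref{defn:sS-Cart}: using the Segal property to rewrite $\mcE_n/x$ as a hom-space fibred over $\mcE_0$ --- exactly as in the proof of Lemma~\ref{lem:Cart-gdCart} --- the square of Definition~\ref{defn:sS-Cart} relating $\mcE_2/e\xrightarrow{d_1}\mcE_1/d_0e$ to its image under $P$ becomes, up to equivalence, the square of Definition~\ref{defn:Cart}, and a $2$-simplex $s\in\mcB_2/P(e)$ with $d_2 s$ an equivalence corresponds to the choice of an equivalence $u:b\to P(e)$. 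Thus an edge is locally $NP$-Cartesian in the semi-Segal sense precisely when it is locally $P$-Cartesian in the sense of Definition~\ref{defn:Cart}; here I would invoke Lemma~\ref{lem:wu-locCart-anyEquiv} so that the ``at all $s$ with $d_2 s$ an equivalence'' clause matches the ``for some equivalence $u$'' clause. The coCartesian case is obtained by applying this to $P^{op}$.

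Third, with this dictionary in place, Theorem~\ref{thm:sS-localAdd} applies to $NP:N\mcE\to N\mcB$ together with $NF:N\mcC\to N\mcB$, showing that the level-wise pullback square of semi-simplicial spaces realises to a homotopy pullback of spaces. Unwinding $B(-)=\|N(-)\|$ and $N(\mcC\times_\mcB\mcE)\cong N\mcC\times_{N\mcB}N\mcE$ yields the claimed homotopy pullback
\[
    \begin{tikzcd}
        B(\mcC \times_{\mcB} \mcE) \ar[r] \ar[d] & B\mcE \ar[d, "P"] \\
        B\mcC \ar[r, "F"] & B\mcB ,
    \end{tikzcd}
\]
and since $F$ is an arbitrary weakly unital functor this exhibits $P$ as a realisation fibration.

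I expect the main obstacle to be the bookkeeping in the second step: verifying that the homotopy-fibre-of-face-map formulation of a locally Cartesian fibration between semi-Segal spaces (Definition~\ref{defn:sS-Cart}) really coincides, under the nerve, with the hom-space formulation of Definition~\ref{defn:Cart}. This means repeatedly using the Segal identifications to rewrite $\mcE_n/x$, carefully tracking which base points (equivalences, weak units) make the various squares homotopy Cartesian, and appealing to Lemma~\ref{lem:wu-locCart-anyEquiv} to see that the choice of $b$ and $u$ is immaterial. By comparison, the facts that the fibrancy of $\mcB$ makes $N\mcB$ a suitable semi-Segal space, that $N$ preserves weak units and the pullback, and that a local fibration induces a level-wise fibration, are routine.
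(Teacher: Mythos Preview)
Your proposal is correct and matches the paper's approach: reduce to Theorem~\ref{thm:sS-localAdd} via nerves, invoking \cite[Remark~2.10]{St18} for the weakly unital semi-Segal structure and the content of Lemma~\ref{lem:locCart-wu-sS} (together with Lemma~\ref{lem:wu-locCart-anyEquiv}) for the locally (co)Cartesian translation. The only imprecision is the phrase ``becomes, up to equivalence'': the hom-space square of Definition~\ref{defn:Cart} and the nerve square of Definition~\ref{defn:sS-Cart} are not level-wise equivalent but sit as front and back faces of a cube whose top and bottom are homotopy Cartesian, and it is the cubical pasting Lemma~\ref{lem:cubical-po-law}---already implicit in your reference to Lemma~\ref{lem:Cart-gdCart}---that carries the ``homotopy Cartesian at a point'' property across.
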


Before we prove this from Theorem \ref{thm:sS-localAdd}, we first need to translate 
from weakly unital categories to semi-Segal spaces.
The (not locally) Cartesian part of the following lemma is \cite[Remark 2.8(ii+iii)]{St18}.

\begin{lem}\label{lem:locCart-wu-sS}
    Consider a weakly unital functor $P:\mcE \to \mcB$ of weakly unital, locally fibrant categories,
    such that $P$ is a local fibration. 
    Then $P:\mcE \to \mcB$ is locally Cartesian if and only if $NP:N\mcE \to N\mcB$ is locally Cartesian
    in the sense of semi-Segal spaces.
\end{lem}
\begin{proof}
    Fix some morphism $f:e \to e'$ in $\mcE$. We would like to show that $f$ is locally $P$-Cartesian,
    if and only if the corresponding edge $f \in N_1\mcE$ is locally $NP$-Cartesian.
    We will do this by applying lemma \ref{lem:local-Cartesian-via-fibers} to the following diagram.
    \[
        \begin{tikzcd}
            N_2\mcE/f \ar[r, "d_1"] \ar[d, "P_2"] \ar[dr, phantom, "*"] & 
            N_1\mcE/e' \ar[r, "d_1"] \ar[d, "P_1"] &
            N_0\mcE \ar[d, "P_0"] \\
            N_2\mcB/P(f) \ar[r, "d_1"] & 
            N_1\mcB/P(e') \ar[r, "d_1"] & 
            N_0\mcB
        \end{tikzcd}
    \]
    By definition, $f$ is $NP$-Cartesian if and only if the square $(*)$ is homotopy Cartesian
    at the path-component of $N_2\mcB/P(f)$ that contains those $2$-simplices whose 
    first morphism is a weak unit. (Note that in general this component will also contain $2$-simplices
    whose first edge is not a weak unit.)
    To be concrete, let $u:P(e) \to P(e)$ be a weak unit
    and let $\gs \in N_2\mcB/P(f)$ be the $2$-simplex defined by $u$ and $P(f)$.
    Lemma \ref{lem:local-Cartesian-via-fibers} says that we can check whether $(*)$ is
    homotopy Cartesian at $\gs$ by passing to homotopy fibers with respect to the map
    to $N_0\mcE$ and $N_0\mcB$. The image of $\gs$ in $N_0\mcB$ is $P(e)$, and we let
    $t \in N_0\mcE$ with $P(t) = P(e)$ be a potential lift.
    
    The homotopy fiber of the map $d_1:N_1\mcE/e' \to N_0\mcE$ at $t$ is $t\backslash N_1\mcE /e'$,
    which is equivalent to the strict fiber $\mcE(t, e') = N_1 \mcE \times_{(N_0\mcE)^2} \{(t,e')\}$
    because we assumed that $\mcE$ is locally fibrant.
    Similarly, the homotopy fiber of $d_1 \circ d_1:N_2 \mcE/f \to N_0\mcE$ at $t$
    is equivalent to $\mcE(t, e)$.
    Hence the diagram obtained from $(*)$ by passing to homotopy fibers is equivalent to:
    \[
        \begin{tikzcd}
            \mcE(t, e) \ar[r, "(f \circ \blank)"] \ar[d] \ar[dr, phantom, "\dag_t"] &
            \mcE(t, e') \ar[d] \\ 
            \mcB(P(e), P(e)) \ar[r, "d_1"] & \mcB(P(e), P(e'))
        \end{tikzcd}
    \]
    Lemma \ref{lem:local-Cartesian-via-fibers} says that $(*)$ is homotopy Cartesian at $\gs$
    if and only if $(\dag_t)$ is homotopy cartesian at the weak unit $u:P(e) \to P(e)$ for all $t$.
    In other words, $f$ is locally $NP$-Cartesian, if and only if it is locally $P$-Cartesian.
    
    Since we have now shown that the notions of locally Cartesian edges agree,
    it follows that the subspace $N_1\mcC^{NP\rm -lCart} \subset N_1\mcB$ 
    in the sense of definition \ref{defn:sS-Cart} consists exactly of the $P$-Cartesian edges.
    The semi-simplicial map $NP$ is locally Cartesian if and only if the map 
    $(d_1,P): N_1\mcE^{NP\rm -lCart} \to N_0\mcE \times_{N_0\mcB} N_1\mcB$
    is surjective on path-components, and the functor $P$ is locally Cartesian 
    if and only if this map is surjective on-the-nose.
    But $(d_1,P)$ is a Serre fibration because $P$ is a local fibration,
    so the two are equivalent.
\end{proof}

\begin{proof}[Proof of theorem \ref{thm:wu-locadd}]
    Let $P: \mcE \to \mcB$ be a weakly unital functor of weakly unital topological categories
    such that $\mcB$ is fibrant, $P$ is a fibration, $P$ is \emph{locally} Cartesian,
    and $P$ is \emph{locally} coCartesian. 
    Then we need to show that $P$ is a realisation fibration.
    In other words, for every weakly unital functor $F:\mcC \to \mcB$ 
    we need to show that the following square is homotopy Cartesian:
    \[
        \begin{tikzcd}
            B(\mcC \times_{\mcB} \mcE) \ar[r] \ar[d] & B\mcE \ar[d, "P"] \\
            B\mcC \ar[r, "F"] & B\mcB.
        \end{tikzcd}
    \]
    We do this by applying theorem \ref{thm:sS-localAdd}, 
    the version of the additivity theorem for semi-Segal spaces,
    to the diagram of semi-Segal spaces
    \[
        N\mcC \xrightarrow{\ F\ } N\mcB \xleftarrow{\ P\ } N\mcE.
    \]
    This is a diagram of weakly unital maps between semi-Segal spaces 
    by \cite[Remark 2.10]{St18} and $NP$ is locally Cartesian and locally coCartesian
    by lemma \ref{lem:locCart-wu-sS}.
    Moreover, $NP$ is a level-wise fibration because $P$ is a local fibration.
    The conclusion of \ref{thm:sS-localAdd} indeed proves our claim
    since the classifying space $B\mcC$ is exactly defined as 
    the geometric realisation of $N\mcC$.
\end{proof}

\section{An application: the cospan category}\label{sec:Cosp}

The classifying space of various cobordism categories have been computed via geometric 
methods, most notably in \cite{GMTW06}.
These cobordism categories are, in some sense, categories of cospans of manifolds.
In this section we apply Theorem \ref{thm:main} to study the category 
of cospans of finite sets, which one might think of as a 
combinatorial bordism category.

\subsection{The strict model for the cospan category}
In order to study the classifying space of $\Csp$ we need a strictly associative model
for the cospan category. To achieve this, we need to be extremely careful
about how we construct pushouts.
We therefore establish some notation.

\begin{notation}
    For any integer $n \ge 0$ we let $\ul{n}$ denote the finite set
    \[
        \ul{n} := \{1, \dots, n\} \subset \IZ.
    \]
    Given an equivalence relation $R$ on $\ul{n}$ we write $xRy$ to denote that $x$ is equivalent to $y$
    under $R$.
    The equivalence class of $i \in \ul{n}$ will be denoted by
    \[
        [i]_R := \{ j \in \ul{n} \;|\; i R j\}.
    \]
    The quotient of $\ul{n}$ by $R$ is the set of equivalence classes
    \[
        \ul{n}/R := \{ [i]_R \;|\; i \in \ul{n}\} \subset \mathfrak{P}(\ul{n}),
    \]
    which is technically a certain subset of the power-set of $\ul{n}$.
\end{notation}

\begin{rem}
    Note that it is a property for a set $X$ to be of the form $\ul{n}/R$.
    Concretely, if for two numbers $n,m \ge 0$ and two relations $R$ and $S$
    the two sets $\ul{n}/R$ and $\ul{m}/S$ are equal, then $n=m$ and $R=S$.
    Of course we can still have $\ul{n}/R \cong \ul{m}/S$ for $n \neq m$.
\end{rem}

\begin{defn}
    For two finite sets $A$ and $B$ we let $\Csp(A,B)$ 
    be the groupoid with objects cospans $(A \to W \leftarrow B)$
    such that $W = \ul{n}/R$ 
    for some $n\ge 0$ and some equivalence relation $R$ on $\ul{n}$.
    A morphism in this groupoid is a bijection $\gs$ making the following diagram commute:
    \[
        \begin{tikzcd}[row sep = tiny]
            & W \ar[dd, "\gs"] & \\
            A \ar[ru, "i"]\ar[rd, "i'"'] && B \ar[lu, "j"']\ar[ld, "j'"] \\
                            & W' & 
        \end{tikzcd}
    \]
\end{defn}

Next, we choose an explicit representative of the pushout of two cospans:
\begin{defn}
    For three finite sets $A,B,C$ we define the functor
    \[
        \mu: \Csp(B,C) \times \Csp(A,B) 
        \longrightarrow \Csp(A,C)
    \]
    by sending $(B \to \ul{m}/S \leftarrow C)$ 
    and $(A \to \ul{n}/R \leftarrow B)$ to the 
    cospan $(A \to (\ul{n+m})/Q \leftarrow C)$.
    Here the equivalence relation $Q$ on $\ul{n+m}$ is generated by the following requirements:
    \begin{align*}
        \forall x,y \in \ul{n} :x R y &\Rightarrow x Q y    
                                   & \forall z,w \in \ul{m}: z S w &\Rightarrow (z+n) Q (w+n) 
                                   & \forall b \in B: j(b) Q l(b) .
    \end{align*}
    In other words, $Q$ is the \emph{unique} equivalence relation on $\ul{n+m}$ 
    such that the diamond in the following diagram is well defined and a pushout square:
    \[
        \begin{tikzcd}[row sep = tiny]
            & & \ul{n+m}/Q & & \\
            & \ul{n}/R \ar[ru] && \ul{m}/S \ar[lu, "\blank+n"']  & \\
            A \ar[ru, "i"] && B \ar[lu, "j"'] \ar[ru, "k"] && C \ar[lu, "l"'] .
        \end{tikzcd}
    \]
    On morphisms $\mu$ is defined via the universal property of the pushout.
\end{defn}

\begin{rem}
    It is straightforward to check that $\mu$ is strictly associative.
    The composite of the two cospans above with a third cospan $(C \to \ul{l}/U \leftarrow D)$
    will be of the form $(A \to (\ul{n+m+l})/T \leftarrow D)$ for some equivalence relation
    $T$ on $\ul{n+m+l}$. 
    This equivalence relation is uniquely determined by a universal property
    and does not depend on which order we composed the cospans in.
\end{rem}

\begin{rem}\label{lem:Csp-str}
    Usually the category of cospans is defined as a bicategory $\mrm{biCsp}$, 
    see for example \cite{Ben67}.
    The above defines a non-unital strict $2$-category $\Csp^{\mrm{str}}$ with objects
    finite sets and $\hom$-categories the groupoids $\Csp(A,B)$.
    Every bicategory can be strictified and $\Csp^{\mrm{str}}$ is a specific
    choice of a (non-unital) strictification of $\mrm{biCsp}$.
    Indeed the canonical map $\Csp^{\mrm{str}} \to \mrm{biCsp}$ is a biequivalence,
    in the sense that it is essentially surjective and an equivalence on $\hom$-categories.
    To see this, observe that every finite set is in bijection with a set of the form 
    $\ul{n}/R$, and 
    therefore every cospan $(A \to W \leftarrow B)$ is isomorphic to a cospan
    of the form $(A \to \ul{n}/R \leftarrow B)$. 
\end{rem}

\begin{defn}\label{defn:topCosp}
    The non-unital topologically enriched category $\Csp$ has as objects natural numbers 
    $n \ge 0$ and as $\hom$-spaces\footnote{
        Here $B$ is defined by taking the simplicial nerve and then the standard geometric realisation.
        In particular, $B$ commutes with Cartesian products. 
        This would not be the case if we took the ``fat geometric realisation''.
    }
    \[
        \hom_{\Csp}(a,b) := B\Csp(\ul{a}, \ul{b}).
    \]
    Composition is induced by the functor $\mu$.

    We define $\hCsp$ as the ordinary category that is obtained from $\Csp$ by identifying
    two cospans whenever there is an isomorphism between them.
    We let $P: \Csp \to \hCsp$ denote the quotient functor 
    that sends a cospan $(\ul{a} \to \ul{n}/R \leftarrow \ul{b})$ to its isomorphism class.
\end{defn}

\begin{lem}\label{lem:hom-Csp}
    The above defines a weakly unital topological category $\Csp$.
    Moreover, the $\hom$-spaces in $\Csp$ decompose as
    \[
        \hom_{\Csp}(A, B) 
        \simeq \coprod_{[A \to W \leftarrow B] \in \hCsp(A,B)} B\gS_k
    \]
    where $\gS_k$ is the symmetric group of order
    $k = |W \setminus \mi{Im}(A \amalg B \to W)|$.
\end{lem}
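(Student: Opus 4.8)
\textbf{Proof proposal for Lemma~\ref{lem:hom-Csp}.}
The plan is to analyse the groupoid $\Csp(\ul{a},\ul{b})$ directly and read off both claims from its structure. First I would verify weak unitality: for each object $n$ the cospan $u_n := (\ul{n} \xrightarrow{\id} \ul{n}/R_{\mrm{triv}} \xleftarrow{\id} \ul{n})$, where $R_{\mrm{triv}}$ is the trivial (diagonal) relation so that $\ul{n}/R_{\mrm{triv}} \cong \ul{n}$, is a weak unit. One checks that $\mu(u_n, \blank)$ and $\mu(\blank, u_n)$ are, up to the canonical isomorphisms coming from the universal property of the pushout, naturally isomorphic to the identity functors on the relevant hom-groupoids; hence $u_n \circ u_n \simeq u_n$ in $\Csp(\ul n,\ul n)$ and pre-/post-composition with $u_n$ induce equivalences of hom-groupoids, so after applying $B$ they become homotopy equivalences of hom-spaces. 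This is the routine bookkeeping part.

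Next, for the decomposition: the hom-space is $B\Csp(\ul a,\ul b)$ and $B$ commutes with coproducts, so it suffices to decompose the groupoid $\Csp(\ul a,\ul b)$ into connected components and compute each component's automorphism group. Two objects of $\Csp(\ul a,\ul b)$ lie in the same component precisely when there is a bijection of the apexes commuting with the legs from $\ul a$ and $\ul b$ — that is, when they are isomorphic as cospans — and by definition the set of isomorphism classes is exactly $\hCsp(\ul a,\ul b)$. A groupoid is equivalent to the disjoint union over its components of the one-object groupoid on the automorphism group of any chosen representative, and $B$ sends this to $\coprod B\Aut$; so I get
\[
    \hom_{\Csp}(\ul a,\ul b) = B\Csp(\ul a,\ul b) \simeq \coprod_{[A\to W\leftarrow B]} B\Aut_{\Csp(\ul a,\ul b)}(A\to W\leftarrow B).
\]
It then remains to identify the automorphism group of a cospan $(A \xrightarrow{i} W \xleftarrow{j} B)$ with $\gS_k$ for $k = |W \setminus \mi{Im}(i \amalg j)|$. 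An automorphism is a bijection $\gs: W \to W$ with $\gs i = i$ and $\gs j = j$; such a $\gs$ must fix every point of $\mi{Im}(i)\cup\mi{Im}(j)$ pointwise and may permute the remaining $k$ points arbitrarily. Hence $\Aut \cong \gS_k$, and $B\gS_k$ is (up to homotopy equivalence, which is all we need since $B$ of equivalent groupoids are homotopy equivalent) the claimed factor. Combining the two displays gives the statement.

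I do not expect a serious obstacle here; the only mild subtlety is making sure the weak-unit verification is carried out with the specific strict model $\Csp^{\mrm{str}}$ of Remark~\ref{lem:Csp-str}, i.e.\ that the unit cospan's apex is literally of the form $\ul n/R$ (which it is, with $R$ the diagonal), so that $u_n$ is an honest object of the strict hom-groupoid and the pushout defining $\mu(u_n,\blank)$ is the chosen strict one. Once that is in place, associativity of $\mu$ (already noted in the preceding remark) together with the naturality isomorphisms makes the weak-unit axioms formal. The component analysis is essentially the observation that $\hCsp(\ul a,\ul b) = \pi_0 \Csp(\ul a,\ul b)$, which is immediate from Definition~\ref{defn:topCosp}.
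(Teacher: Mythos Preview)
Your proposal is correct and follows essentially the same approach as the paper: decompose the groupoid $\Csp(\ul a,\ul b)$ into isomorphism classes indexed by $\hCsp(\ul a,\ul b)$, then identify the automorphism group of each cospan with $\gS_k$ by noting that an automorphism must fix the image of $A\amalg B$ pointwise. Your treatment of the weak unit is in fact more detailed than the paper's, which simply asserts that $[A=A=A]$ is a weak unit without further comment; your care about the apex being literally of the form $\ul n/R$ is a nice touch that the paper leaves implicit.
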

\begin{proof}
    $\Csp$ defines a non-unital topologically enriched category:
    we observed earlier that $\mu$ is strictly associative, 
    so $B\mu$ is strictly associative, too.
    It has weak units of the form $[A = A = A]$.
    
    Fixing two finite sets $A$ and $B$, we would like to understand the space $\hom_{\Csp}(A,B)$,
    or equivalently, the groupoid $\Csp(A,B)$.
    It decomposes as a disjoint union of its isomorphism classes,
    which by definition are the elements of $\hCsp(A,B)$.
    All that remains to show that the group of automorphisms of a cospan $(A \to W \leftarrow B)$
    is indeed $\gS_k$ for $k = |W \setminus \mi{Im}(A\amalg B \to W)|$.
    
    Let $\ga$ be an automorphism of $(A \to W \leftarrow B)$.
    Then $\ga: W \to W$ is a bijection, which has to fix both the image of $A \to W$
    and the image of $B \to W$.
    So 
    \[
        \Aut(A \to W \leftarrow B) \cong \Aut(W \setminus \mi{Im}(A \amalg B \to W)) \cong \gS_k
    \]
    and the claim follows.
\end{proof}

\subsection{Proof of theorem \ref{thm:Cosp-fib}}
According to lemma \ref{lem:hom-Csp} the difference between $\Csp$ and $\hCsp$ 
lies in the fact that a cospan $(A \to W \leftarrow B)$ in $\Csp$ has non-trivial 
automorphisms defined by permuting points in $W \setminus \mrm{Im}(A \amalg B \to W)$.
We now consider ``reduced cospans'', for which this set is trivial:
\begin{defn}
    For a cospan $(A \to W \leftarrow B)$ we define its \emph{reduced} and its \emph{closed} part as
    \[
        r(W) := \mi{Im}(A \amalg B \to W) 
        \qand 
        c(W) := W \setminus r(W).
    \]
    Here we suppress the dependence of $r(W)$ and $c(W)$ on the maps $A,B \to W$ in the notation.
    We call the cospan reduced if $W=r(W)$ and closed if $W=c(W)$.
\end{defn}

\begin{defn}
    The reduced cospan category $\rCsp$ has as objects finite sets $\ul{a}$
    as morphisms isomorphism classes of reduced cospans. 
    Composition is defined by
    \[
        [\ul{b} \to V \leftarrow \ul{c}] \circ [\ul{a} \to W \leftarrow \ul{b}] 
        := [\ul{a} \to r(W\cup_B V) \leftarrow \ul{c}].
    \]
    We let $R$ denote the reduction functor
    \[
        R:\hCsp \to \rCsp, 
        \qquad 
        [\ul{a} \to W \leftarrow \ul{b}] \mapsto [\ul{a} \to r(W) \leftarrow \ul{b}].
    \]
\end{defn}

\begin{rem}
    To show that the composition in $\rCsp$ is associative, 
    we need to check that for any three composable cospans:
    \[
        [\ul{a} \to r(r(W \cup_B V) \cup_C U) \leftarrow \ul{d}]
        =
        [\ul{a} \to r(W \cup_B r(V \cup_C U)) \leftarrow \ul{d}].
    \]
    The two sets $r(r(W \cup_B V) \cup_C U)$ and $r(W \cup_B r(V \cup_C U))$ can both  
    be thought of as subsets of the double-pushout $W \cup_B V \cup_C U$.
    In fact, both are the image of $\ul{a} \amalg \ul{d} \to W \cup_B V \cup_C U$, so they agree.
\end{rem}

\begin{rem}\label{rem:red-not-sub}
    Note that the composition of two reduced cospans is not always reduced:
    \[
        (\ul{1} \xrightarrow{1} \ul{2} \xleftarrow{2} \ul{1}) \circ (\ul{0} \to \ul{1} \leftarrow \ul{1})
        = (\ul{0} \to \ul{3}/(1\sim 2) \xleftarrow{3} \ul{1}) 
        \cong (\ul{0} \to \ul{2} \xleftarrow{2} \ul{1}) .
    \]
    (Recall that the order of composition in the cospan category is counter-intuitive.)
    This shows that it is essential to define the composition in $\rCsp$ as $r(W \cup_B V)$,
    since the pushout $W \cup_B V$ might not be reduced.
    It also shows that $\rCsp$ is not a subcategory of $\hCsp$, but rather a quotient category:
    
    The reduction functor is a bijection on objects and surjective on morphisms.
    We can hence define $\rCsp$ as the quotient of $\Csp$ by the relation 
    that identifies two cospans if their reduced parts are isomorphic.
\end{rem}

We now wish to compute the homotopy fiber of the quotient functor $R:\hCsp \to \rCsp$.
To apply theorem \ref{thm:wu-locadd} to the functor $R$ we need to check
that it is locally Cartesian and coCartesian:

\begin{lem}\label{lem:RlocCart}
    The reduction functor 
    \[
        R:\hCsp \to \rCsp
    \]
    is locally Cartesian and locally coCartesian.
    However, it is neither Cartesian nor coCartesian.
\end{lem}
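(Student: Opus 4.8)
The plan is to first determine which morphisms of $\hCsp$ are locally $R$-Cartesian, and then read off all four assertions. Since $R$ is the identity on objects (Remark~\ref{rem:red-not-sub}), the unique object of $\hCsp$ lying over $R(A)$ is $A$ itself, so by Definition~\ref{defn:locCart} a morphism $f = [A \to W \leftarrow B]$ is locally $R$-Cartesian exactly when post-composition with $f$ defines a bijection from $S := \{\, g\colon A \to A \mid R(g) = \id\,\}$ onto $T := \{\, h\colon A \to B \mid R(h) = R(f)\,\}$. The first observation is that isomorphism classes of cospans $[A \to V \leftarrow B]$ are classified by the isomorphism class of the reduced part $[A \to r(V) \leftarrow B]$ together with the cardinality $|c(V)|$ of the closed part, since an isomorphism of cospans carries $r(V) = \mi{Im}(A \amalg B \to V)$ to the corresponding reduced part and is otherwise unconstrained on $c(V)$. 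Hence $T$ is identified with $\IN$ via $h \mapsto |c(W_h)|$; and since $g \in S$ forces both legs of $g$ to be the \emph{same} bijection onto $r(V)$, so that the isomorphism class of $g$ is determined by $|c(V)|$ alone, the set $S$ is likewise identified with $\IN$. Write $e_j \in S$ for the class with $|c(V)| = j$.

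The computation at the heart of the lemma is that $f \circ e_j = [A \to W \amalg \ul{j} \leftarrow B]$ is $f$ with $j$ free points adjoined to the apex, because the relevant pushout of $W \leftarrow A \hookrightarrow A \amalg \ul{j}$ is $W \amalg \ul{j}$. None of the adjoined points lies in the image of $A \amalg B$, so the closed part of $f \circ e_j$ has cardinality $|c(W)| + j$; under the above identifications $(f \circ \blank)\colon S \to T$ is therefore the map $j \mapsto |c(W)| + j$, which is a bijection if and only if $c(W) = \emptyset$. This proves the key fact: \emph{$f$ is locally $R$-Cartesian if and only if it is reduced}, i.e.\ if and only if $A \amalg B \to W$ is surjective.

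From this the positive claims are immediate. Every morphism of $\rCsp$ is represented by a reduced cospan $k = [A \to W \leftarrow B]$, which, viewed in $\hCsp$, is reduced (hence locally $R$-Cartesian) and satisfies $R(k) = k$, so it is a locally $R$-Cartesian lift of itself over the unique object above its target; thus $R$ is locally Cartesian. For the coCartesian statement I would use that swapping the two legs of a cospan defines isomorphisms $\hCsp^{op} \cong \hCsp$ and $\rCsp^{op} \cong \rCsp$ which intertwine $R^{op}$ with $R$ — this works because $r(W)$ is symmetric in the two legs — so $R$ being locally Cartesian forces $R^{op}$ to be locally Cartesian, i.e.\ $R$ to be locally coCartesian; and the same duality will reduce ``not coCartesian'' to ``not Cartesian''. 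To see that $R$ is not Cartesian, note that an $R$-Cartesian morphism is in particular locally $R$-Cartesian, hence reduced, and a reduced cospan $k$ satisfies $R(k) = k$, so the only candidate for an $R$-Cartesian lift of a morphism of $\rCsp$ is that morphism itself; if $R$ were Cartesian, every reduced cospan would then be $R$-Cartesian, and since composites of Cartesian morphisms are Cartesian (by the pasting law~\ref{lem:pb-pasting}) the composite in $\hCsp$ of any two reduced cospans would be $R$-Cartesian, hence reduced — contradicting the example of Remark~\ref{rem:red-not-sub}.

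The only substantial work is in the two identifications $S \cong \IN \cong T$ and the pushout description of $f \circ e_j$; I expect the bookkeeping around isomorphism classes of cospans — in particular the verification that such a class is faithfully recorded by its reduced part together with the cardinality of the closed part — to be the main technical point. However, since everything takes place at the level of isomorphism classes in $\hCsp$ one may argue throughout with representatives and ordinary pushouts of finite sets, so the strict model of $\Csp$ plays no role here and, once the characterisation of locally $R$-Cartesian morphisms is in hand, all four claims are formal.
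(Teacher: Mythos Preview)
Your proof is correct and follows essentially the same approach as the paper: identify the fibers of $R$ on hom-sets with $\IN$ via $|c(W)|$, compute that post-composition with $f$ acts as $j \mapsto |c(W)|+j$, deduce that locally $R$-Cartesian equals reduced, and use the leg-swap involution for the coCartesian half. The only visible difference is in the negative claim: the paper invokes \cite[Proposition~2.4.2.8]{LurHTT} (in a Cartesian fibration, locally Cartesian edges are Cartesian) to pass from ``$R$ Cartesian'' to ``reduced cospans compose'', whereas you argue directly that the unique candidate lift forces every reduced cospan to be $R$-Cartesian and then use that Cartesian morphisms compose; both routes land on the same contradiction with Remark~\ref{rem:red-not-sub}.
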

\begin{proof}
    Let us first look at the fiber of the map
    \[
        R:B\hCsp(A,B) \to B\rCsp(A,B) 
    \]
    at some reduced cospan $[A \to U \leftarrow B]$.
    Every cospan in the fiber is of the form 
    \[
        [A \to W \leftarrow B] 
        = [A \to U \leftarrow B] \amalg [\emptyset \to c(W) \leftarrow \emptyset].
    \]
    Hence the fiber is in bijection with $\IN$ by counting the elements 
    of $c(W) = W \setminus \mi{Im}(A \amalg B \to W)$.

    Consider a cospan $[A \to W \leftarrow B] \in \hCsp$; we will show that it is 
    locally $R$-Cartesian precisely if it is reduced.
    By definition \ref{defn:locCart} the cospan $W$ is locally $R$-Cartesian 
    precisely if the top map in the following diagram is a bijection:
    \[
        \begin{tikzcd}[column sep = 4pc]
            R^{-1}([A=A=A]) \ar[r, "{(\blank \cup_A W)}"] \ar[d, "\cong"]
            &  R^{-1}([A \to r(W) \leftarrow B]) \ar[d, "\cong"] \\
            \IN \ar[r, "(\blank + |c(W)|)"] & \IN
        \end{tikzcd}
    \]
    This diagram commutes,
    and hence $[A \to W \leftarrow B]$ is locally $R$-Cartesian precisely if $|c(W)| = 0$,
    i.e.\ if it is reduced.

    The proof that $[A \to W \leftarrow B]$ is locally $R$-coCartesian precisely if it is reduced,
    is completely analogous.
    In fact, all variants of the cospans categories are isomorphic
    to their opposite via the functor that sends $(A\to W \leftarrow B)$
    to $(B \to W \leftarrow A)$. 
    Since $R$ commutes with this involution, the claim that a morphism 
    is locally $R$-coCartesian if and only if it is reduced follows formally.
    
    To see that $R$ is indeed locally Cartesian and locally coCartesian,
    we observe that any morphism in $\rCsp$ has a reduced lift to $\Csp$.

    Finally, we show that $R$ is neither a Cartesian nor a coCartesian fibration.
    Say $R$ was Cartesian, then by \cite[Proposition 2.4.2.8]{LurHTT} 
    the composite of two locally $R$-Cartesian morphisms ought to be locally $R$-Cartesian.
    But as we saw in remark \ref{rem:red-not-sub} the composite of two reduced cospans 
    is not always reduced, leading to a contradiction.
\end{proof}

Similarly, we have:
\begin{lem}\label{lem:R2locCart}
    The reduction functor 
    \[
        R \circ P: \Csp \to \rCsp
    \]
    is locally Cartesian and locally coCartesian.
\end{lem}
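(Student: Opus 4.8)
The plan is to reduce Lemma~\ref{lem:R2locCart} to Lemma~\ref{lem:RlocCart} by exploiting the factorisation $R \circ P$ and the structure of the quotient functor $P:\Csp \to \hCsp$. First I would recall that $P$ is a bijection on objects and, by Lemma~\ref{lem:hom-Csp}, the hom-space $\hom_{\Csp}(A,B)$ decomposes as a disjoint union of classifying spaces $B\gS_k$, one for each isomorphism class $[A \to W \leftarrow B] \in \hCsp(A,B)$, with $k = |c(W)|$. In particular $P$ restricted to hom-spaces is, up to equivalence, the map $\coprod B\gS_k \to \mathrm{pt}$ on each component, so $\hom_{\Csp}(A,B) \to \hom_{\hCsp}(A,B)$ is exactly $\pi_0$-identification plus contracting the automorphisms. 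The key point is that for computing whether an edge is \emph{locally} $(R\circ P)$-Cartesian, Definition~\ref{defn:Cart} only asks about homotopy fibers of the maps $P_{(t,e)}:\mcE(t,e) \to \mcB(P(t),P(e))$ over a single point $u$, i.e.\ about the behaviour on connected components together with the induced map of path components; and the extra $B\gS_k$ factors, being connected, do not change which components the relevant point sits in.

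The main steps, in order: (1) Observe that $R \circ P$ is a composite of weakly unital functors; it suffices to identify the locally $(R\circ P)$-Cartesian edges and check that every morphism in $\rCsp$ lifts to one. (2) Fix a reduced cospan $[A \to U \leftarrow B] \in \rCsp(A,B)$ and analyse the genuine fiber of $R\circ P$ over it: an object of $\Csp$ in this fiber is a cospan of the form $(A \to U \amalg c \leftarrow B)$ with $c$ a finite closed set, and the morphism space between two such fiber objects over the identity of $[A \to U \leftarrow B]$ is the space of bijections $c \to c'$, i.e.\ empty unless $|c|=|c'|$ and otherwise $\simeq B\gS_{|c|}$. So the genuine fiber $\Csp_{|[A\to U\leftarrow B]}$ is equivalent, as a category, to the groupoid $\coprod_{k\ge 0} B\gS_k$. (3) Using the computation in Lemma~\ref{lem:RlocCart} that a cospan $[A\to W\leftarrow B]\in\hCsp$ is locally $R$-Cartesian precisely when it is reduced, and that reduced lifts always exist, conclude by the same argument — now tracing through the cubical homotopy-Cartesian-at-a-point analysis (or directly via Definition~\ref{defn:Cart} using the hom-space decomposition of Lemma~\ref{lem:hom-Csp}) — that a cospan in $\Csp$ is locally $(R\circ P)$-Cartesian precisely when it is reduced, and that the reduced lift of any morphism in $\rCsp$ witnesses local (co)Cartesian-ness of $R\circ P$. (4) The coCartesian statement follows, exactly as in Lemma~\ref{lem:RlocCart}, from the involution $(A\to W\leftarrow B)\mapsto(B\to W\leftarrow A)$ on $\Csp$, with which $R\circ P$ commutes.

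Concretely, for step (3) I would set up the square of Definition~\ref{defn:Cart} for a candidate locally $(R\circ P)$-Cartesian edge $f:e\to e'$ with $e=(A\to U\amalg c\leftarrow B)$ reduced (so $c=\emptyset$) and check homotopy Cartesianness at the image of an equivalence $u:t\to e$. Since $\Csp$ is weakly unital and $P$ is weakly unital, the relevant homotopy fibers are, componentwise, either empty or $B\gS_k$'s, and the map $(f\circ\blank)$ on homotopy fibers is precisely the map induced by disjoint union with the closed part of $W'$, matching the $\IN \xrightarrow{+|c(W')|}\IN$ picture of Lemma~\ref{lem:RlocCart} on $\pi_0$ and an equivalence on each component. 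Hence it is a weak equivalence exactly when the source cospan is reduced, and conversely a non-reduced cospan fails because the $\pi_0$-map is not a bijection — identical to the argument in Lemma~\ref{lem:RlocCart}.

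The main obstacle I expect is bookkeeping: keeping track of the topology (the $B\gS_k$ factors) rather than working with discrete categories as in Lemma~\ref{lem:RlocCart}, and making sure the ``homotopy Cartesian at a point'' formalism of Definition~\ref{defn:locCartSqr} is invoked correctly — in particular that contracting connected pieces $B\gS_k$ neither creates nor destroys local Cartesian-ness. This is genuinely routine given Lemma~\ref{lem:hom-Csp} and Lemma~\ref{lem:wu-locCart-anyEquiv}, but it is the step where one could slip; everything else is a formal reprise of the proof of Lemma~\ref{lem:RlocCart}. Since the argument is so close to that of Lemma~\ref{lem:RlocCart}, the write-up can legitimately be brief: ``The proof is completely analogous to that of Lemma~\ref{lem:RlocCart}, replacing the discrete hom-sets of $\hCsp$ by the hom-spaces of $\Csp$ described in Lemma~\ref{lem:hom-Csp}; since disjoint union with a closed cospan acts on $\pi_0$ of these hom-spaces exactly as addition of $|c(W)|$ on $\IN$, a cospan in $\Csp$ is locally $(R\circ P)$-Cartesian if and only if it is reduced, and reduced lifts always exist.''
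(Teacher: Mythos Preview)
Your proposal is correct and follows essentially the same route as the paper: identify the fiber of $R\circ P$ on hom-spaces with $B\Fin^\simeq \simeq \coprod_{k\ge 0} B\gS_k$ via $(A\to W\leftarrow B)\mapsto c(W)$, observe that post-composition with a reduced cospan induces the identity on this fiber (your $\pi_0$ computation $+|c(W)|$ with $|c(W)|=0$ is exactly this), and then defer to Lemma~\ref{lem:RlocCart} for the remaining formalities, including the coCartesian case via the involution. One small remark: in your concrete step (3) you write ``$e=(A\to U\amalg c\leftarrow B)$ reduced'', but $e$ is an object (a finite set), not a morphism --- the reducedness condition belongs to the edge $f$; also, the paper only proves (and only needs) the ``if'' direction, whereas you assert ``if and only if'', which is true but surplus to requirements.
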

\begin{proof}
    Let $\mi{Fin}^\simeq$ denote the groupoid of finite sets and bijections.
    The fiber of 
    \[
        R \circ P: \Csp(A,B) \to \rCsp(A,B)
    \]
    at some cospan $[A \to W \leftarrow B]$ is equivalent to $B\mi{Fin}^\simeq$.
    The equivalence is the restriction of the functor
    \[
        \Csp(A,B) \to \mi{Fin}^\simeq \quad (A \to W \leftarrow B) 
        \mapsto 
        c(W) = W \setminus \mi{Im}(A \amalg B \to W)
    \]
    to the fiber.
    Just like in lemma \ref{lem:RlocCart} we see that for reduced cospans in 
    $\Csp(A,B)$ the induced maps on the fiber over weak units is the identity 
    on $\mi{Fin}^\simeq$.
    Therefore reduced cospans satisfy the condition in point 2 of definition \ref{defn:Cart},
    which means that they are locally $(R\circ P)$-Cartesian in the sense of weakly unital
    topological categories.
    
    The rest of the proof follows like in lemma \ref{lem:RlocCart}.
\end{proof}
        
With the two previous lemmas at hand we can now apply theorem \ref{thm:wu-locadd}:
\begin{proof}[Proof of Theorem~\ref{thm:Cosp-fib}]
    Let $*$ and $\IN$ denote the unital categories with one object and endomorphisms
    $\{\id_*\}$ and $\IN$, respectively. 
    There is a pullback diagram of categories
    \[
        \begin{tikzcd}
            \IN \ar[r] \ar[d] & \hCsp \ar[d, "R"] \\
            * \ar[r] & \rCsp
        \end{tikzcd}
    \]
    where the bottom functor sends $*$ to the object $\emptyset \in \rCsp$.
    Theorem \ref{thm:wu-locadd} applies to this diagram:
    all categories involved are unital and discrete, hence weakly unital and fibrant,
    and lemma \ref{lem:RlocCart} verifies the only non-trivial condition.
    This shows that the following is a homotopy fiber sequence:
    \[
        S^1 \simeq B\IN \longrightarrow B\hCsp \xrightarrow{ \ R \ } B\rCsp.
    \]

    Similarly, we obtain the fiber sequence for $\Csp$:
    let $\mc{F}$ denote the full (topological) subcategory of $\Csp$ 
    on the object $\emptyset$. 
    Then there is a pullback diagram of topologically enriched
    non-unital categories:
    \[
        \begin{tikzcd}
            \mc{F} \ar[r] \ar[d] & \Csp \ar[d, "R \circ P"] \\
            * \ar[r] & \rCsp.
        \end{tikzcd}
    \]
    Theorem \ref{thm:wu-locadd} applies to this diagram: lemma \ref{lem:R2locCart} 
    checks that $R \circ P$ is locally Cartesian and locally coCartesian;
    moreover, $R \circ P$ is trivially a local fibration since $\rCsp$ 
    and the space of objects of $\Csp$ are discrete.

    To obtain the desired homotopy fiber sequence recall from the proof of \ref{lem:R2locCart}
    that $\mc{F}$ 
    is the monoidal groupoid $\mi{Fin}^\simeq$ thought of as a bicategory with one object.
    Its classifying space is hence the classifying space of the topological monoid
    \[
        B \mi{Fin}^\simeq \simeq \coprod_{k \ge 0} B\gS_k.
    \]
    By the Barratt-Priddy-Quillen theorem we have that
    \[
        \gO B(B\mi{Fin}^\simeq) \simeq Q(S^0)
    \]
    and therefore $B\mcF \simeq B(B\mi{Fin}^\simeq, \amalg) \simeq Q(S^1)$.
    Under this identification the map $B(B\mi{Fin}^\simeq, \amalg) \to B\IN$
    is the infinite loop space map $QS^1 \to S^1$ which induces an isomorphism on $\pi_1$.
    Its fiber is the universal covering of $QS^1$ 
    and this proves the final claim in Theorem \ref{thm:Cosp-fib}.
\end{proof}

\begin{rem}
    We expect that the diagram in Theorem \ref{thm:Cosp-fib} is a diagram of infinite loop spaces.
    For the bottom row this can be shown using standard infinite loop space machinery,
    because all the categories involved are symmetric monoidal.
    Establishing this fact for the top is beyond the scope of this paper, 
    but will be addressed as part of future work.
\end{rem}


\subsection{Comparison to the cospan category of Raptis and Steimle}
We now apply our techniques to the cobordism category $\Cob(\mcC)$
defined  in \cite{RS19}.
In the special case where $\mcC$ is the category of finite sets
with injections as cofibrations their result says:
\[
    \gO B\Cob(\Fin) \simeq K(\Fin) \simeq Q(S^0).
\]
In fact, this cobordism category of finite sets is equivalent to a certain subcategory
of $\Csp$:
\begin{defn}
    We let $\iCsp \subset \Csp$ be the subcategory that contains all objects,
    but only those morphisms $(A \to W \leftarrow B)$ where the map $A \to W$ is injective.
    Let $\riCsp \subset \rCsp$ denote the subcategory defined 
    by the same condition.
\end{defn}

\begin{rem}
    In \cite{RS19} $\Cob(\mcC)$ is not defined as a topological category, but rather as a simplicial space $\Cob(\mcC)_\cd$.
    When the weak equivalences in the Waldhausen category $\mcC$ are the isomorphisms,
    the simplicial space $\Cob(\mcC)_\cd$ satisfies the Segal-condition and can hence be thought of as an $(\infty,1)$-category.
    In the case at hand one can define a simplicial map 
    $N_\cd\iCsp \to \Cob(\Fin)_\cd$ by sending an $n$-simplex 
    $(\ul{a}_0 \to W_1 \leftarrow \ul{a}_1 \to \dots \leftarrow \ul{a}_n) \in N_n\iCsp$
    to the functor $F:\mathrm{tw}[n] \to \Fin$ defined by 
    $F_{ij} := (W_{i+1} \cup_{\ul{a}_{i+1}} \dots \cup_{\ul{a}_{j-1}} W_j)$.
    (Here we are using the notation of \cite[section 2]{RS19}.)
    This map $N_\cd\iCsp \to \Cob(\Fin)_\cd$ is a Dwyer-Kan equivalence,
    and hence in particular induces an equivalence on geometric realisations.
    We may therefore work with $\iCsp$ instead $\Cob(\Fin)_\cd$ in what follows.
\end{rem}

We can now give a new proof of the main result of \cite{RS19}
in the example $\mcC = \Fin$:

\begin{proof}[Proof of Corollary \ref{cor:RS-Fin}]
    Consider the diagram of weakly unital topological categories:
    \[
        \begin{tikzcd}
            \iCsp \ar[d, "I"] \ar[r, "Q"] & \riCsp \ar[d, "{I'}"]\\
            \Csp \ar[r, "R \circ P"] & \rCsp.
        \end{tikzcd}
    \]
    It is a pullback diagram because a cospan $(A \to W \leftarrow B) \in \Csp$
    lies in the subcategory $\iCsp$, precisely if the reduced cospan 
    $(A \to r(W) \leftarrow B) \in \rCsp$ lies in the subcategory $\riCsp$.
     
    Using Theorem \ref{thm:main}
    we showed in \ref{lem:R2locCart} that $R\circ P$ is a realisation fibration.
    Therefore the above square becomes a homotopy pullback square after geometric realisation.
    The inclusion $\mcF \to \Csp$ of the full subcategory on $\emptyset$
    factors through the subcategory $\iCsp$
    and we hence have the following commutative diagram:
    \[
        \begin{tikzcd}
            B\mcF \ar[r, "F"] \ar[d, equal] & B\iCsp \ar[d, "I"] \ar[r, "Q"] 
                                             & B\riCsp \ar[d, "{I'}"]\\
            B\mcF \ar[r] & B\Csp \ar[r, "R \circ P"] 
                          & B\rCsp
        \end{tikzcd}
    \]
    Both rows are fiber sequences and we have that $B\mcF \simeq Q(S^0)$
    as in the proof of Theorem \ref{thm:Cosp-fib}.
    
    It remains to check that $B\riCsp$ is contractible.
    First, note that $\riCsp$ is an ordinary category: its $\hom$-spaces are discrete.
    In fact, it is equivalent to the category of finite sets and partially defined maps.
    For our purposes it suffices to note that the object $\emptyset = \ul{0}$ is terminal:
    any reduced and injective cospan $(A \to W \leftarrow \ul{0})$ is isomorphic 
    to $(A = A \leftarrow \ul{0})$.
    This proves the final claim of the corollary as the classifying space 
    of any category with a terminal object is contractible.
\end{proof}
 
\begin{rem}
    Note that the proof of corollary C remains almost unchanged if we replace $\iCsp$
    by the subcategory $\iiCsp \subset \iCsp$ where both legs are required to be injective.
    In particular the object $\emptyset$ is still terminal in $\riiCsp$.
    Hence we also have that $B\iiCsp \simeq B\iCsp \simeq Q(S^1)$.
\end{rem}

\bibliography{literature}{}
\bibliographystyle{alpha}

\bigskip
{\footnotesize
Jan Steinebrunner, \textsc{DPMMS, Centre for Mathematical Sciences, Wilberforce Road, Cambridge CB3 0WB,
UK} \par\nopagebreak
  \textit{E-mail address}:  \texttt{js2675@cam.ac.uk}.
}

\end{document}